\newtheorem{thm}{Theorem}[section]
\newtheorem{lem}[thm]{Lemma}
\newtheorem{cor}[thm]{Corollary}
\newtheorem{prop}[thm]{Proposition}
\theoremstyle{definition}
\newtheorem{defn}[thm]{Definition}
\newtheorem{rem}[thm]{Remark}
\newtheorem{exmp}[thm]{Example}
\newtheorem{prob}[thm]{Problem}
\DeclareMathOperator{\Aut}{Aut}
\DeclareMathOperator{\Sym}{Sym}
\DeclareMathOperator{\cstar}{Star}
\DeclareMathOperator{\cSt}{St}
\DeclareMathOperator{\frakK}{\mathfrak{K}}
\DeclareMathOperator{\fraks}{\mathfrak{s}}
\DeclareMathOperator{\frakf}{\mathfrak{f}}
\DeclareMathOperator{\ad}{ad}
\newcommand{\wt}[1]{\widetilde{#1}}
\newcommand{\id}{\textrm{id}}
\DeclareMathOperator{\lk}{lk}
\DeclareMathOperator{\Core}{Core}
\tikzset{
	labl/.style={anchor=south, rotate=90, inner sep=.5mm}
}
\newcommand\restr[2]{{
  \left.\kern-\nulldelimiterspace 
  #1 
  \vphantom{\big|} 
  \right|_{#2} 
  }}
\begin{document}

\author{ Daniel J.~Woodhouse}
\title{Leighton's Theorem and Regular Cube Complexes}

\address{}
\email{daniel.woodhouse@mail.mcgill.ca}
\keywords{}
\subjclass[]{}

\thanks{}

\begin{abstract}
Leighton's graph covering theorem states that two finite graphs with common universal cover have a common finite cover.
We generalize this to a large family of non-positively curved special cube complexes that form a natural generalization of regular graphs.
This family includes both hyperbolic and non-hyperbolic CAT(0) cube complexes.
\end{abstract}

\maketitle

Leighton's graph covering theorem states that two finite graphs with isomorphic universal covers have isomorphic finite covers.
First conjectured by Angluin~\cite{Angluin80} and proven by Leighton~\cite{Leighton82}, whose background was in computer science and the study of networks, the topic has been picked up by topologists and group theorists interested in producing generalizations to graphs with extra structure, including colourings and line patterns (See~\cite{BassKulkarni90,Neumann10,Woodhouse21,Shepherd19}).
Although it is desirable to generalize such a theorem to higher dimensions, counter-examples are known even when the universal cover is the product of two trees.
Standard arithmetric constructions were known to give irreducible lattices acting on the product of trees, and in the 90's non-residually finite and even simple examples were given (see~\cite{BurgerMozes97, WiseThesis}).

A particularly exciting conjecture was made by Haglund in~\cite{Haglund06} that Leighton's graph covering theorem should generalize to special cube complexes.
In the same paper Haglund proved the conjecture for the class of right angled Fuchsian buildings (commonly referred to as ``Bourdon buildings'') and more generally for ``type-preserving'' lattices in the automorphism group of a building associated to a finite graph product of finite groups. 

In this paper we will prove Haglund's conjecture for a large family of CAT(0) cube complexes which exhibit symmetry and homogeneity reminiscent of finite regular trees.
Let $L$ be a finite simplicial flag complex.
An \emph{$L$-cube-complex} $X$ is a cube complex such that every link is isomorphic to $L$.
Given a flag complex, the Davis complex $D(L)$ of the associated right angles Coxeter group is a CAT(0) $L$-cube-complex.
In general, $D(L)$ is not the unique CAT(0) $L$-cube-complex, but in~\cite{Lazarovich18} Lazarovich shows that $D(L)$ is unique if and only if  $L$ is \emph{superstar-transitive}.
A flag complex $L$ is superstar-transitive if for any two simplicies $\sigma , \sigma' \subseteq L$ any isomorphism $\cSt(\sigma) \to \cSt(\sigma')$ sending $\sigma$ to $\sigma'$ extends to an automorphism of $L$.
Lazarovich also showed that in this case $\Aut(X)$ is a virtually simple.

The principal set of examples of superstar transitive flag complexes presented by Lazarovich are \emph{Kneser complexes}.
Let $\Delta$ be a finite set.
The Kneser complex $\frakK_n(\Delta)$ is the simplicial flag complex defined with vertex set the $n$-element subsets of $\Delta$, and edges joining disjoint $n$-element subsets.
In the particular case that $|\Delta| = nd +1$ the Kneser complex is superstar transitive and its automorphism group is precisely the natural action of the permutation group $\Sym(\Delta)$. (See Section~\ref{sec:Kneser}).
We prove the following:

\begin{thm} \label{thm:main}
 Let $n \geq 2, d\geq 1$ and let $\Delta$ be a finite set of cardinality $nd+1$.
 Let $L$ be the Kneser complex $\frakK_n(\Delta)$.
 Suppose that $X_1, X_2$ are compact, $L$-cube-complexes such that all finite index subgroups of the hyperplane subgroups are separable in $\pi_1 X_1$ and $\pi_1 X_2$ respectively.
 Then $X_1$ and $X_2$ have a common finite cover.
\end{thm}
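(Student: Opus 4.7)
The plan is to reduce to a combinatorial matching problem by first making hyperplane types globally coherent, then using separability to match finer local data, and finally realising the fundamental groups as commensurable uniform lattices in $\Aut(D(L))$, from which a common finite cover is immediate.

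Since $|\Delta| = nd+1$, the Kneser complex $L = \frakK_n(\Delta)$ is superstar-transitive, so by Lazarovich's theorem the universal cover of any $L$-cube-complex is the Davis complex $D(L)$. Hence $\tilde X_1 \cong D(L) \cong \tilde X_2$, and we may view $G_i := \pi_1 X_i$ as uniform lattices in $H := \Aut(D(L))$. The hyperplanes of $D(L)$ are canonically labelled by vertices of $L$, i.e.\ by $n$-subsets of $\Delta$, and the $H$-action on these labels factors through $\Aut(L) \cong \Sym(\Delta)$.

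\textbf{Coherent labellings via separability.} Each hyperplane of $X_i$ inherits a label that is well-defined modulo the image of the type homomorphism $\rho_i : G_i \to \Sym(\Delta)$. Since $\Sym(\Delta)$ is finite, $\ker \rho_i$ has finite index, so the corresponding regular cover $X_i'$ has hyperplanes coherently labelled by vertices of $L$. The separability hypothesis --- that finite-index subgroups of hyperplane subgroups are separable in $G_i$ --- will then be used to pass to further finite covers on which more refined local combinatorial data (labels of cubes in the star of each hyperplane, identifications of carriers with labelled model carriers in $D(L)$, agreement of incidence patterns among adjacent hyperplanes) are matched between $X_1'$ and $X_2'$. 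Each matching step follows the usual separability pattern: an obstruction localises as a coset condition on a hyperplane subgroup, which separability of that subgroup's finite-index subgroups kills in a finite cover.

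\textbf{Common cover and main obstacle.} Once all relevant local data match, the goal is to realise $\pi_1 X_1'$ and $\pi_1 X_2'$ as commensurable uniform lattices inside a type-preserving subgroup $H^+ \leq H$, from which a common finite cover follows at once. The main obstacle is this final step: the Burger--Mozes examples of non-residually-finite simple lattices in a product of trees show that mere type-matching cannot suffice, and that some separability input is essential for any Leighton-type conclusion in dimension $\geq 2$. The heart of the argument will be to iterate the separability hypothesis --- potentially dimension by dimension along an exhaustion of $D(L)$ by combinatorial balls --- so as to make $X_1'$ and $X_2'$ locally indistinguishable at arbitrarily large combinatorial radius, exploiting the rigidity provided by the Kneser-complex symmetry (that $\Sym(\Delta)$ acts transitively on simplices of each dimension via its action on $\Delta$) to ensure that separability of hyperplane subgroups alone, rather than full virtual specialness, is enough to carry out the matching.
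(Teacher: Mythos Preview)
Your proposal rests on a map that does not exist. You assert that the $H$-action on hyperplane labels ``factors through $\Aut(L)\cong\Sym(\Delta)$'', giving a type homomorphism $\rho_i:G_i\to\Sym(\Delta)$ whose kernel already lands you in the type-preserving subgroup. But the type-preserving subgroup of $\Aut(D(L))$ is exactly $W_L$, and by Lazarovich $\Aut(D(L))$ is virtually simple and non-discrete, so $W_L$ is not normal in $H$. Concretely, an automorphism $g\in H$ need not send all hyperplanes of a given colour to hyperplanes of a single colour: if $H_1,H_2$ have the same label then $H_2=wH_1$ for some $w\in W_L$, and $gH_2=(gwg^{-1})gH_1$ has the same label as $gH_1$ only if $gwg^{-1}\in W_L$. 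So there is no homomorphism $H\to\Sym(\Delta)$, hence no $\rho_i$ obtained by restriction. If such a $\rho_i$ existed your argument would prove the theorem without ever using separability, which you yourself note cannot be right in light of Burger--Mozes.

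This is exactly the obstruction the paper's machinery is built to overcome. The paper does not start from a global type map; it constructs one. For each edge $e=(x,y)$ the adjacency map on links, together with $\Aut(\frakK_n(\Sigma))=\Sym(\Sigma)$, yields a canonical partial bijection $\phi_e^*:\Delta_x\setminus\Lambda_x\to\Delta_y\setminus\Lambda_y$. Extending these to full bijections $\Delta_x\to\Delta_y$ requires choosing a bijection $\Lambda_x\to\Lambda_y$ across each hyperplane $\Lambda$, and for these choices to be made consistently one must first kill the \emph{parallel holonomy} $\pi_1(\Lambda)\to\Sym(\Lambda_x)$ on each side of each carrier. This is precisely where separability of finite-index subgroups of hyperplane subgroups is used: it lets one pass to a finite cover in which all parallel holonomies are trivial, after which a genuine $\Delta$-category exists and produces the holonomy $\pi_1 X\to\Sym(\Delta)$ you were hoping to get for free. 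Passing to its kernel and reading off maps $q_x:\Delta_x\to\Delta$ then gives an honest orbi-cover $\hat X_i\to X_L=W_L\backslash D(L)$, realising $\pi_1\hat X_i$ as a finite-index subgroup of $W_L$; intersecting inside $W_L$ gives the common cover. Your later paragraphs (``match refined local data'', ``iterate separability along combinatorial balls'') describe a goal rather than a mechanism, and in any case cannot get started without the missing holonomy.
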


If the hyperplane subgroups of a compact non-positively curve cube complex are separable, then there is a finite cover such that the hyperplanes embed, do not self-osculate, and are $2$-sided.
If no inter-osculations could be added to this list, then the cube complex would be virtually special.
Conversely, specialness implies separable hyperplane subgroups, and it is conjectured that the converse holds as well.

Note that in the case $d =1$ that $L$ is the set of $n+1$ disconnected points, so the $L$-cube-complexes will be $(n+1)$-regular trees. 
If $n=2,d=2$ then $L$ is the famous Petersen graph.
In the case when $L$ has no induced squares (as in the case of the Petersen graph), the fundamental groups of $X_1$ and $X_2$ will be hyperbolic~\cite{Moussong88}, and as a consequence of Agol's proof of the virtual Haken conjecture~\cite{Agol13}, they are virtually special.
Thus we have:

\begin{cor}
 Let $n \geq 2, d = 1,2$ and let $|\Delta| = nd+1$ and $L = \frakK_n(\Delta)$.
 If $X_1$ and $X_2$ are compact $L$-cube complexes then $X_1$ and $X_2$ have common finite covers.
\end{cor}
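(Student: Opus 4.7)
The plan is to derive the corollary directly from Theorem~\ref{thm:main} by showing that its separability hypothesis is automatic when $d \in \{1,2\}$. The case $d = 1$ is essentially Leighton's original theorem: then $L$ is an edgeless set of $n+1$ vertices, any $L$-cube-complex is an $(n+1)$-regular graph, and the hyperplane subgroups are trivial, so the hypothesis of Theorem~\ref{thm:main} is vacuous. The substance lies in the case $d = 2$.

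For $d = 2$ I would first verify the combinatorial fact that $L = \frakK_n(\Delta)$ with $|\Delta| = 2n+1$ has no triangles and no induced $4$-cycles. Triangle-freeness is immediate because three pairwise disjoint $n$-subsets would force $3n \leq 2n+1$. For induced squares $A \sim B \sim C \sim D \sim A$ with $A \not\sim C$ and $B \not\sim D$, the vertices $A, C$ are both $n$-subsets of the $(n+1)$-element set $\Delta \setminus B$, and $B, D$ are $n$-subsets of $\Delta \setminus (A \cup C)$. A short size count, using $|A \cap C| \geq 1$, shows the latter complement has fewer than $n+1$ elements, so $B$ and $D$ must coincide, contradicting that they are distinct vertices of the $4$-cycle.

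With $L$ square-free in this sense, the remark preceding the corollary invokes Moussong's criterion to conclude that $\pi_1 X_i$ is word-hyperbolic for $i = 1,2$, and Agol's resolution of the virtual Haken conjecture then furnishes finite covers $X_i' \to X_i$ whose fundamental groups $G_i' = \pi_1 X_i'$ are compact special. In a hyperbolic special group, hyperplanes are convex subcomplexes of the CAT(0) universal cover, so hyperplane subgroups are quasiconvex, and every quasiconvex subgroup (together with each of its finite-index subgroups) is separable.

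The last step is the transfer of separability back to $\pi_1 X_i$. Given a hyperplane subgroup $H \leq \pi_1 X_i$ and any finite-index $K \leq H$, the intersection $K \cap G_i'$ is a finite-index subgroup of the hyperplane subgroup $H \cap G_i'$ of $G_i'$, hence separable in $G_i'$, hence separable in $\pi_1 X_i$; since $K$ is a finite union of cosets of $K \cap G_i'$, it is separable in $\pi_1 X_i$ as well. Theorem~\ref{thm:main} then supplies the common finite cover. The only place where any real ingenuity enters is the Kneser-combinatorics calculation that rules out induced squares; everything else is a routine assembly of Moussong's theorem, Agol's theorem, and standard facts about passing separability between a group and a finite-index subgroup.
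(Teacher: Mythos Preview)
Your proposal is correct and follows essentially the same route as the paper: reduce to showing that $L=\frakK_n(\Delta)$ with $|\Delta|=2n+1$ contains no induced square, then invoke Moussong and Agol (as the paper does in the paragraph immediately preceding the corollary) to obtain virtual specialness and hence the separability hypothesis of Theorem~\ref{thm:main}. Your argument is in fact more explicit than the paper's: the paper's proof records only the square-freeness computation and leaves the passage ``square-free $\Rightarrow$ hyperbolic $\Rightarrow$ virtually special $\Rightarrow$ separable hyperplane subgroups'' to the preceding discussion, whereas you spell out the quasiconvexity and the transfer of separability from the finite-index special cover back to $\pi_1 X_i$. The triangle-freeness check you include is correct but not needed for the argument.
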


\begin{proof}
 In the case $d =1$ the cube complexes are graphs, so it suffices to show that $L$ is square free when $d = 2$.
 Let $\Delta = \{1, \ldots, 2n +1\}$.
 Suppose that $v_1, v_2, v_3, v_4$ are the vertices of an induced square in $L$.
 Then without loss of generality we can assume that $v_1 = \{1, \ldots,n\}$ and $v_2 = \{ n+1, \ldots, 2n\}$ since they are disjoint sets.
 Thus we can further assume that $v_3 = \{2,\ldots, n, 2n+1\}$ since it must be an $n$-element set disjoint from $v_2$.
 Then we have a contradiction since $v_4$ must be an $n$-element subset disjoint from $v_1 \cup v_3 = \{1, \ldots, n, 2n+1\}$ so $v_2 = v_4$.
\end{proof}

\subsection{Strategy}
The plan is to show (in Proposition~\ref{prop:virtually_virtually_Coxeter}) that each $L$-cube-complex has a finite cover that has a finite orbi-covering $X \to X_L$, where $X_L$ is the orbi-complex $W_L \backslash D(L)$.
We seek to construct this orbi-covering by identifying the link of the $0$-cube in $X_L$ with $\frakK_n(\Delta)$ and finding a suitable map $\lk(x) \to \frakK_n(\Delta)$ for each $x$ so that the orbi-covering is defined.
By associating a copy $\Delta_x$ of $\Delta$ with each $0$-cube in $X$ we identify $\lk(x)$ with $\frakK_n(\Delta_x)$.
The orbi-covering is then locally defined by a choice of map $q_x: \Delta_x \to \Delta$ (see Lemma~\ref{lem:induced_automorphisms}).

In order for the set of $q_x$ to define an orbi-cover we need to ensure that certain conditions are satisfied.
If $e = (x,y)$ is a $1$-cube, then we need to ensure that $e$ will be mapped to the same half edge in $X_L$ by the maps induced by $q_x$ and $q_y$.
Given a square in $X$, we also need to ensure that it will be mapped to a quarter-square in $X_L$.

In Section~\ref{sec:Delta_Categories}, we formulate problem in the language of a \emph{$\Delta$-category}, which is a choice of bijection $\phi_e : \Delta_x \to \Delta_y$ for each edge $e = (x,y)$, satisfying certain conditions.
Most of the action in this paper concerns being able to (virtually) construct a $\Delta$-category.
Once we have the $\Delta$-category we obtain a holonomy 
$$\Psi: \pi_1(X,x) \to \Sym(\Delta_x)$$
and the kernel of this holonomy will give a finite cover for which we can define suitable $q_x$ (see Section~\ref{sec:Holonomy}).

\subsection{Previous results and connections to QI-rigidity}
A major motivation for proving Haglund's conjecture is the potential applications to Gromov's program of understanding groups up to quasi-isometry~\cite{Gromov87}.
In~\cite{Haglund06}, Haglund proved his conjecture for Bourdon buildings and his result can be combined with a result of Bourdon-Pajot~\cite{BourdonPajot00} which says that each quasi-isometry of such a building is finite distance from a unique automorphism.
Thus we deduce that if $G$ is a group quasi-isometric to the graph product $W$ associated to such a Bourdon building $B$, then in fact it acts by isometries on $B$.
By Agol's result~\cite{Agol13}, $G$ will be virtually special, thus acting faithfully on $B$, and by Haglund $G$ will be weakly commensurable with $W$.
Thus $W$ is quasi-isometrically rigid.

This argument motivates the following problem:

\begin{prob} \label{prob:qi_rigidity}
 Let $L = \frakK_n(\Delta)$, where $|\Delta| = nd+1$.
 Is every quasi-isometry of $D(L)$ finite distance from an automorphism?
\end{prob}

A positive answer to Question~\ref{prob:qi_rigidity} in the hyperbolic case would immediately give quasi-isometric rigidity for the associated groups $W_\Gamma$ by a similar argument to the case of Bourdon buildings.
That is to say that any group quasi-isometric to $W_\Gamma$ would be weakly commensurable with $W_\Gamma$.
In the ``higher rank'' non-hyperbolic case one might look to Huang's results on the quasi-isometric rigidity of large families of right angled Artin groups~\cite{Huang18}.
In this case following would need to be considered:

\begin{prob} \label{prob}
 Suppose that $L$ is a Kneser complex as above, such that $W_\Gamma$ is not hyperbolic.
 Are there groups acting geometrically on $D(\Gamma)$ that are not virtually special?
\end{prob}

{\bf Acknowledgements:} I would like to thank Daniel Groves and Kevin Whyte for mentioning the particularly interesting case of the Petersen graph, and Nir Lazarovich and Jingyin Huang for discussions relating to these results. I would like to thank Sam Shepherd for pointing out a mistake and suggesting the alternate separability condition on the hyperplane subgroups.

\section{Preliminaries}

\subsection{Right Angled Coxeter Groups}

We refer to Davis~\cite{Davis08} for classical background on Coxeter groups and their geometry and to~\cite{Dani20} for a recent survey of their large scale geometry.

Let $L$ denote a finite simplicial flag complex.
The right angled Coxeter group $W_L$ associated to $L$ is given by the presentation:
\[
 W_L = \langle v \in L^{(0)} \mid v^2 =1 \textrm{ and } [u,v] =1 \textrm{ if } (u,v) \in L^{(1)} \rangle. 
\]
The Davis complex $D(L)$ is the CAT(0) cube complex obtained from the Cayley graph constructed from the above presentation complex, after collapsing the $v^2$ bigons to a single edge, and inserting higher dimensional cubes whenever their $2$-skeleton appears.
The link of each vertex in $D(L)$ is isomorphic to $L$, which makes it a \emph{$L$-cube-complex}.
The following theorem tells us when $D(L)$ is the unique CAT(0) $L$-cube-complex

\begin{thm}[{\cite[Thm 1.2]{Lazarovich18}}]
 The Davis complex $D(L)$ is the unique CAT(0) cube complex with each link isomorphic to $L$ if and only if $L$ is superstar-transitive.
\end{thm}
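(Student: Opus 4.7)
The plan is to prove both directions by a layer-by-layer extension from a base vertex. Given two CAT(0) $L$-cube complexes $X, X'$, I would pick base vertices $x_0 \in X$, $x_0' \in X'$ and an initial isomorphism $\phi_0: \cSt(x_0) \to \cSt(x_0')$, which exists because both links are isomorphic to $L$, and then attempt to extend $\phi$ inductively over enlarging combinatorial balls $B_r(x_0) \to B_r(x_0')$.

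For the ``if'' direction, assume $L$ is superstar-transitive. At each boundary vertex $y$ of $B_r$, the already-defined map pins down, inside $\lk(y) \cong L$, the star of a simplex $\sigma_y$ corresponding to the edges of $y$ leading back into $B_{r-1}$; these form a simplex because they span a common cube by the CAT(0) cube condition. To extend $\phi$ over $\cSt(y)$ one needs to identify $\lk(y)$ with $\lk(\phi(y))$ so as to agree on $\cSt(\sigma_y)$. Superstar-transitivity of $L$ is precisely the statement that any such partial star-identification extends to a full automorphism of $L$, so the local extension succeeds at each boundary vertex.

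For the ``only if'' direction, suppose $L$ admits simplices $\sigma, \sigma'$ and an isomorphism $\cSt(\sigma) \to \cSt(\sigma')$ sending $\sigma \mapsto \sigma'$ that does not extend to $\Aut(L)$. I would construct a CAT(0) $L$-cube complex distinct from $D(L)$ by carrying out the same layer-by-layer assembly as for $D(L)$ but using the non-extendable star-identification at one chosen vertex; the flag condition and CAT(0) link condition allow this to be completed to a CAT(0) $L$-cube complex, and the non-extendability obstructs any isomorphism to $D(L)$.

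The hard part will be ensuring coherence of the local extensions in the ``if'' direction: when two adjacent boundary vertices of $B_r$ lie in a common cube that needs to be added, the chosen star-completions at those vertices must agree along their shared link. This demands a careful induction using the simple connectivity of $X$ and $X'$ (as CAT(0) spaces) and the flag condition on links to rule out any monodromy-type obstruction coming from loops in the $2$-skeleton. The flexibility afforded by superstar-transitivity must be used uniformly across all boundary vertices simultaneously to produce a globally consistent extension, yielding the desired isomorphism $\phi: X \to X'$.
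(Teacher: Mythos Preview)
The paper does not prove this theorem: it is quoted verbatim from \cite{Lazarovich18} (as Theorem~1.2 there) and used as a black box. There is therefore no proof in the present paper to compare your proposal against.

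As a standalone sketch your outline is in the right spirit---Lazarovich's argument is indeed an inductive extension over combinatorial balls---but one point is misstated. At a vertex $y$ on the sphere of radius $r$, the map already defined on $B_r(x_0)$ determines only the \emph{downward simplex} $\sigma_y\subset\lk(y)$ (the edges to radius-$(r-1)$ vertices, which do span a simplex in a CAT(0) cube complex); it does not by itself pin down all of $\cSt(\sigma_y)$. The rest of $\cSt(\sigma_y)$ is forced only by the choices made simultaneously at the neighbouring radius-$r$ vertices, via the squares they share with $y$. So the superstar-transitivity hypothesis is not applied to a datum that is ``already there''; rather, the coherence problem you defer to the end is the entire content of the inductive step, and your sketch does not indicate how it is resolved. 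For the ``only if'' direction, a single local change of gluing does not automatically yield a simply connected complex, nor one provably non-isomorphic to $D(L)$; both points require argument.
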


If we colour the edges in $D(L)$ according to the corresponding element of $L$, or alternatively the conjugacy class of the associated generator, we can identify $W_L$ as the subgroup of $\Aut(D(L))$ that preserves the colours.
Sometimes this subgroup is referred to as the \emph{type-preserving automorphisms}.
The quotient $X_L = W_L \backslash D(L)$ has the structure of an orbi-complex. Each codimension-$k$-face has the associated group $(\mathbb{Z}/2)^k$ with a factor corresponding to each codimension-$1$ face that meets there.

\subsection{Kneser Complexes} \label{sec:Kneser}

Let $\Delta$ be a finite set.
The \emph{Kneser complex} $\frakK_n(\Delta)$ is the flag complex with underlying graph with vertex set given by $n$-elements subsets of $\Delta$, and edges corresponding to disjoint $n$-element subsets.
There is a natural action of $\Sym(\Delta)$ on $\frakK_n(\Delta)$.

If $\frakK := \frakK_n(\Delta)$ is a Kneser complex, then we let $\fraks_v = \fraks(v) \subseteq \Delta$ denote the subset associated to $v \in VK$.

\begin{exmp}
If $|\Delta| = 5$, then $P := \frakK_2(\Delta)$ is the \emph{Petersen graph}.
It is a simple exercise to verify that $P$ is triangle and square free.
See Figure~\ref{fig:KG(5,2)}.
\end{exmp}

\begin{figure}
 \centering
 \includegraphics[scale=0.2,keepaspectratio=true]{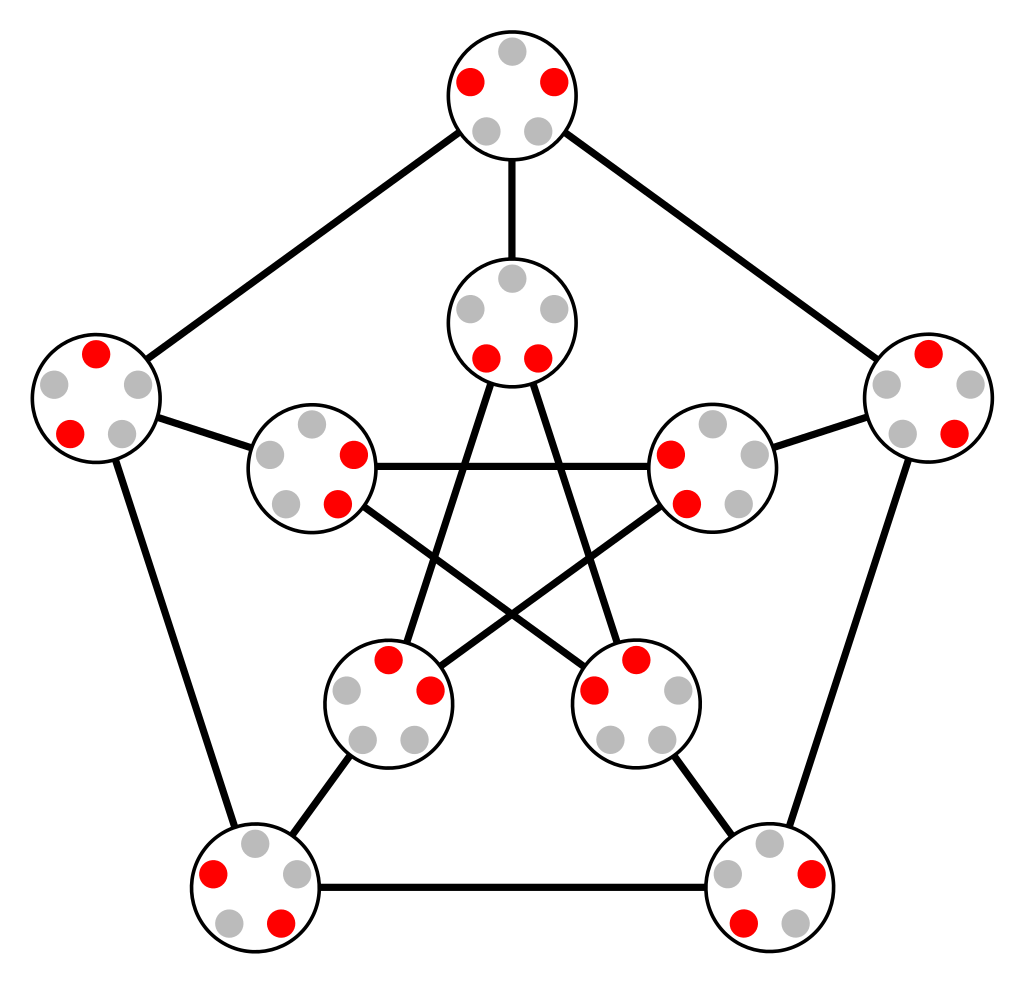}
 \caption{The Petersen graph. (Courtesy of Tilman Piesk~\cite{TilmanPiesk})}
 \label{fig:KG(5,2)}
\end{figure}

More generally, if $|\Delta| = nd +1$, then $\frakK_n(\Delta)$ is a $(d-1)$-dimensional flag simplicial complex with a superstar-transitive automorphism group (see~\cite{Lazarovich18}).
We also note the following:

\begin{lem}[{\cite[Cor 7.8.2]{GodsilRoyleBOOK}}] \label{lem:induced_automorphisms}
 If  $|\Delta| \neq 2n$, then $\Aut(\frakK_n(\Delta))$ is equal to $\Sym(\Delta)$ .
\end{lem}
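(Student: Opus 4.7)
The plan is to leverage the Erdős--Ko--Rado theorem to pin down a canonical family of subsets of the vertex set of $\frakK_n(\Delta)$ that every automorphism must preserve, and which is indexed by $\Delta$.

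First I would check that the natural map $\Sym(\Delta) \to \Aut(\frakK_n(\Delta))$ is injective: if $\sigma(x) = y \neq x$, then any $n$-subset $A$ containing $x$ but not $y$ (which exists whenever $|\Delta| \geq n+1$) satisfies $\sigma(A) \neq A$. So the content of the lemma is surjectivity.

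For surjectivity the key tool is the sharp Erdős--Ko--Rado theorem, which applies when $|\Delta| > 2n$ (this covers the case $d \geq 2$ in the paper; when $d=1$ and $|\Delta| = n+1 < 2n$, $\frakK_n(\Delta)$ is a discrete set of $n+1$ vertices tautologically labelled by the excluded element, so $\Aut = \Sym(\Delta)$ directly). EKR states that the maximum intersecting families of $n$-subsets of $\Delta$ have size $\binom{|\Delta|-1}{n-1}$ and are \emph{exactly} the stars $S_x = \{A \in \binom{\Delta}{n} : x \in A\}$, one for each $x \in \Delta$. Since an automorphism $\phi$ of $\frakK_n(\Delta)$ permutes the maximum independent sets of its underlying graph, $\phi$ permutes the stars, yielding a permutation $\sigma \in \Sym(\Delta)$ defined by $\phi(S_x) = S_{\sigma(x)}$.

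To close the loop I would use the identity $\bigcap_{x \in A} S_x = \{A\}$ valid for every $n$-subset $A$: applying $\phi$ gives $\{\phi(A)\} = \bigcap_{x \in A} S_{\sigma(x)} = \{\sigma(A)\}$, hence $\phi(A) = \sigma(A)$, so $\phi$ is induced by $\sigma$ and $\Sym(\Delta) \to \Aut(\frakK_n(\Delta))$ is surjective. The main obstacle is really the EKR \emph{uniqueness} statement, and the exclusion $|\Delta| \neq 2n$ is sharp here: at $|\Delta| = 2n$ each $n$-subset $A$ has the unique neighbour $\Delta \setminus A$, so $\frakK_n(\Delta)$ is a perfect matching on $\binom{2n}{n}$ vertices, producing $2^{\binom{2n}{n}/2}$ maximum independent sets and a vastly larger automorphism group than $\Sym(\Delta)$.
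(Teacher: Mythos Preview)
The paper does not supply its own proof of this lemma; it is quoted directly from Godsil--Royle \cite[Cor~7.8.2]{GodsilRoyleBOOK}, so there is nothing in the paper to compare against. Your argument via the Erd\H{o}s--Ko--Rado uniqueness theorem is correct and is in fact essentially the proof given in Godsil--Royle: maximum independent sets of $\frakK_n(\Delta)$ are exactly the stars $S_x$ when $|\Delta|>2n$, so every automorphism permutes the stars and is therefore induced by a permutation of $\Delta$ via the identity $\bigcap_{x\in A}S_x=\{A\}$.

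One small caveat worth recording: the lemma as literally stated (the sole hypothesis being $|\Delta|\neq 2n$) is actually false in the range $n+1<|\Delta|<2n$, since there the Kneser graph has no edges and its automorphism group is the full symmetric group on $\binom{|\Delta|}{n}>|\Delta|$ vertices. Your proof correctly covers the two regimes $|\Delta|>2n$ (via EKR) and $|\Delta|=n+1$ (directly), and these are precisely the cases $|\Delta|=nd+1$, $d\geq 1$, that the paper ever uses; so nothing in the paper is affected.
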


Given a subset $\Sigma \subseteq \Delta$, then the inclusion induces an embeddings $\frakK_n(\Sigma) \subseteq \frakK_n(\Delta)$, where the vertex in $\frakK_n(\Sigma)$ corresponding to $\fraks \subseteq \Sigma\subseteq \Delta$ is sent to the corresponding vertex in $\frakK_n(\Delta)$.
Indeed, an automorphism $(\Delta, \Sigma) \to (\Delta, \Sigma)$ induces an automorphism of $\frakK_n(\Delta)$ that restricts to an automorphism on $\frakK_n(\Sigma)$.
Conversely, by Lemma~\ref{lem:induced_automorphisms}, provided $2n$ is not equal to $|\Delta|$ or $|\Sigma|$, an automorphism of $\frakK_n(\Delta)$ that preserves $\frakK_n(\Sigma)$ gives a self-bijection of $\Delta$ that preserves $\Sigma$.

Kneser complexes were presented by Lazarovich as a large and readily accessible set of superstar transitive graphs. 

\begin{thm}[ {\cite[Cor 5.5]{Lazarovich18}} ] \label{thm:virtually_simple}
Let $n \geq 2, d \geq 1$.
Let $|\Delta| = nd +1$ and $L := \frakK_n(\Delta)$,
then $\Aut(D(L))$ is virtually simple.
\end{thm}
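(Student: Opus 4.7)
The plan is to apply a Tits-style simplicity theorem for groups acting on CAT(0) cube complexes, in the spirit of the classical Tits lemma for trees and its extensions by Haglund--Paulin and Caprace--Sageev. Set $G := \Aut(D(L))$ and let $G^+ \leq G$ denote the subgroup generated by the pointwise fixators of the half-spaces of $D(L)$. Virtual simplicity of $G$ will follow from two statements: that $[G:G^+] < \infty$, and that $G^+$ is simple.

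First I would verify the standing hypotheses needed for such a theorem. Since $W_L \leq G$ acts simply transitively on $0$-cubes, $G$ acts cocompactly on $D(L)$, and $D(L)$ is locally finite because $L$ is finite. For $n \geq 2$ and $|\Delta|=nd+1$, the complex $L = \frakK_n(\Delta)$ has enough pairs of non-adjacent vertices that $W_L$ contains products of reflections that are hyperbolic isometries with transverse axes; this gives both minimality of the action (no proper $G$-invariant convex subcomplex) and the absence of a global $G$-fixed point at infinity.

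The heart of the argument, and the place where the Kneser hypothesis enters, is a strong transitivity property: for every half-space $\mathfrak h$, the pointwise stabilizer in $G^+$ of the complementary half-space acts transitively on the local configurations of half-spaces incident to the bounding hyperplane. By induction on cube-distance this reduces to transitivity of $\Aut(L)$ on the local combinatorial data around a vertex of $D(L)$, and that is exactly superstar transitivity of $L$. For $|\Delta|=nd+1$ the Kneser complex $\frakK_n(\Delta)$ is superstar transitive, with $\Aut(L) = \Sym(\Delta)$ by Lemma~\ref{lem:induced_automorphisms}, and an inductive extension along the cube structure upgrades each local automorphism to a global automorphism of $D(L)$ lying in $G^+$ and fixing the prescribed half-space pointwise.

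Granting these ingredients, the Tits-style theorem yields simplicity of $G^+$, while $[G:G^+]$ is bounded by the finite number of $W_L$-orbits on oriented hyperplanes and is therefore finite, giving virtual simplicity of $G$. The main obstacle will be the independence condition (Tits' ``Property~P'' or its cube-complex analogue), namely that pointwise fixators of disjoint half-spaces commute, so that products of fixators along pairwise-disjoint half-spaces decompose as direct products. This is automatic on trees, but on a general CAT(0) cube complex requires a pocset-theoretic argument showing that the fixator of a half-space is supported, in a suitably strong sense, in that half-space; the rich homogeneity of $D(L)$ supplied by superstar transitivity of $L$ is what ultimately makes this independence go through.
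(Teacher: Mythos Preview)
The paper does not prove this theorem at all: it is quoted as \cite[Cor~5.5]{Lazarovich18} and no argument is given. There is therefore no ``paper's own proof'' to compare your proposal against.

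For what it is worth, your outline is broadly in the spirit of how Lazarovich obtains the result, namely by feeding the superstar-transitivity of $L$ and the uniqueness of $D(L)$ into a simplicity criterion of Caprace for groups acting on CAT(0) cube complexes (the cube-complex analogue of Tits' theorem for trees). Two places where your sketch would need tightening if you actually wanted a proof: the claim that $[G:G^+]$ is bounded by the number of $W_L$-orbits of oriented hyperplanes is not the mechanism by which finite index is established in that framework, and the ``independence'' obstacle you highlight is not resolved by an ad hoc pocset argument but is part of the hypotheses one verifies to invoke Caprace's criterion. But since the present paper simply cites the result, none of this is relevant to the comparison you were asked to make.
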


We note that $D(L)$ is Gromov hyperbolic if and only if $L$ does not contain any induced squares~\cite{Moussong88}.
Thus, it is an exercise to verify that $D(L)$ is hyperbolic only if $d \leq 2$.

\begin{rem}
The most direct means that a result like Theorem~\ref{thm:main} could be true is if the automorphism group of $D(L)$ were to act properly.
In which case any other uniform lattice in $\Aut(D(L))$ would lie inside $\Aut(D(L))$ as a finite index subgroup.
Common covers of the corresponding quotient spaces could be constructed by taking the intersections of the associated lattices.
In general the automorphism groups of universal covers will be far too large for this argument to work.
Theorem~\ref{thm:virtually_simple} is the most extreme example of this: since $W_L$ is residually finite (and indeed virtually special), it cannot lie inside a virtually simple group like $\Aut(D(L))$ as a finite index subgroup.
\end{rem}

\section{Special Cube Complexes} \label{sec:Special}

We refer to~\cite{BridsonHaefliger99, Segeev14, HaglundWise08, WiseRiches, Kropholler18} for more detailed background on non-positive curvature, cube complexes, and specialness.
We outline here the terminology that we will use.

An \emph{$n$-cube} $C$ is a metric space isometrically identified with $[-1,1]^n$.
A $0$-cube is a singleton.
A \emph{subcube} $S \subseteq C$ of dimension $m$ in an $n$-cube is the $m$-cube obtained by restricting {$(m-n)$-many} coordinates to $1$ or $-1$.
The $i$-th \emph{midcube} $M \subset C$, for $1 \leq i \leq n$, is the $(n-1)$-cube obtained by restricting the $i$-th coordinate to $0$.

The \emph{reflection} of an $n$-cube over it's $i$-th midcube $M \subseteq C$ is the map $C \to C$ obtained by multiplying the $i$-th coordinate by $-1$.
Note that all the reflections in a cube commute.
The \emph{antipodal} map $C \to C$ is obtained by reflection over all the midcubes in $C$. 

The \emph{link} $\lk(x)$ of a $0$-cube $x$ in an $n$-cube $C$ is the simplex $\sigma$ given by the $\epsilon$-neighborhood of $x$ in the $\ell^1$-metric (where $1>\epsilon>0$).
Each subcube of $C$ that contains $x$ has a link at $x$ that gives a corresponding face in $\sigma$.
If $x$ and $y$ are $0$-cubes in $C$, then $x$ is mapped to $y$ by the composition $R$ of all the reflections over midcubes separating $x$ and $y$.
Thus $R$ induces an isomorphism $\lk(x) \to \lk(y)$.

By a \emph{cube complex} $X$ we will mean a topological space that decomposes into cubes $\mathcal{C}(X)$, such that every subcube of a cube in $\mathcal{C}(X)$ is a cube in $\mathcal{C}(X)$, and such that the intersection of any two cubes $C, C' \in \mathcal{C}(X)$ give subcubes of $C$ and $C'$, or the intersection is empty.
The link $\lk(x)$ of a $0$-cube $x$ in $X$ is the complex given by the union of all the links of all the cubes containing $x$, with inclusion of simplicies induced by inclusion of subcubes.
Alternatively, it can also be thought of as the $\epsilon$ neighbourhood of $x$ inside $X$ itself.
A cube complex $X$ is \emph{non-positively curved} if the link of each vertex is a simplicial flag complex.
Each an $n$-simplex $\sigma$ in $\lk(x)$ corresponds to a unique $(n +1)$-cube $C(\sigma)$ in $X$ containing $x$.
Conversely, each $(n+1)$-cube $C$ that contains $x$ corresponds to a simplex $\sigma(C)$ in $\lk(v)$.

Unless otherwise noted, our $1$-cubes will be \emph{directed} in the sense that $e = (x,y)$ comes with an initial and terminal $0$-cube, denoted $\iota e = x$ and $\tau e = y$.
The \emph{reversed $1$-cube} with the opposite direction will be denoted $\bar e = (y,x)$.
Let $X$ be a compact non-positively curved cube complex.
A hyperplane $\Lambda$ in $X$ is an equivalence class of directed $1$-cubes generated by the relation $e \sim e'$ if they are opposite faces of a square in $X$ or $ \bar e = e'$.
Associated to the equivalence class is the \emph{realization} of $\Lambda$. 
This is a non-positively curved cube complex, which we will also denote by $\Lambda$, constructed from the midcubes dual to the edges in the equivalence class that immerses by a local isometry $\Lambda \looparrowright X$.
Note that this immersion is only a cellular map when both $\Lambda$ and $X$ have been cubically subdivided.
The \emph{hyperplane subgroup} associated to $\Lambda$ is the image of $\pi_1(\Lambda)$ in $\pi_1(X)$ under the injective homomorphism given by the immersion.

A hyperplane is \emph{embedded} no two edges in the equivalence class form the corner of a square (that is to say a $2$-cube) in $X$.
Equivalently a hyperplane is embedded if the immersion of the realization is an embedding.
The \emph{carrier} $N(\Lambda) \subseteq X$ of a hyperplane is the subcomplex obtained by taking all cubes that contain an edge in the associated equivalence class.
We say that a hyperplane is \emph{fully clean and $2$-sided} if $N(\Lambda) \cong \Lambda \times [-1,1]$.
That is to say that we can extend the embedding of the realization to an embedding ${N(\Lambda) = \Lambda \times [-1,1] \hookrightarrow X}$.
If the hyperplane subgroups of $\pi_1(X)$ are separable then there is a finite cover of $X$ such that the hyperplanes are fully clean and $2$-sided (see~\cite[Section 8]{HaglundWise08}).
{\bf Thus, we will now assume going forward that all hyperplanes satisfy this condition.}
In terms of the definition of specialness, this is equivalent to the hyperplanes being $2$-sided, embedded, and without self-osculations.
Such a cube complex may fail to be special since inter-osculations do not contradict this assumption (see Figure~\ref{fig:square} for an illustration of the hyperplane pathologies.)
In terms of the assumptions of Theorem~\ref{thm:main}, if the finite index subgroups of hyperplane subgroups are separable in $\pi_1 X$, then this remains true of the hyperplane subgroups in a finite cover.

An $0$-cube $x$ is \emph{incident} to $\Lambda$ if it is contained in $N(\Lambda)$.
An edge $e = (x,y)$ is \emph{parallel} to $\Lambda$ if it is contained in $N(\Lambda)$ without being dual to $\Lambda$.
Under the assumption that the hyperplane $\Lambda$ is fully clean and $2$-sided, the immersion of the realization extends to an embedding $\Lambda \times [-1,1] \hookrightarrow X$, where the realization is the $0$ fiber.
The edges parallel to $\Lambda$ are contained in the $-1$ and $1$ fibers.
We will refer to the subcomplexes of $X$ given by the $\pm 1$ fibers as the \emph{ sides of the carrier}.

 \begin{figure}
	\begin{overpic}[width=.7\textwidth, tics=5, ]{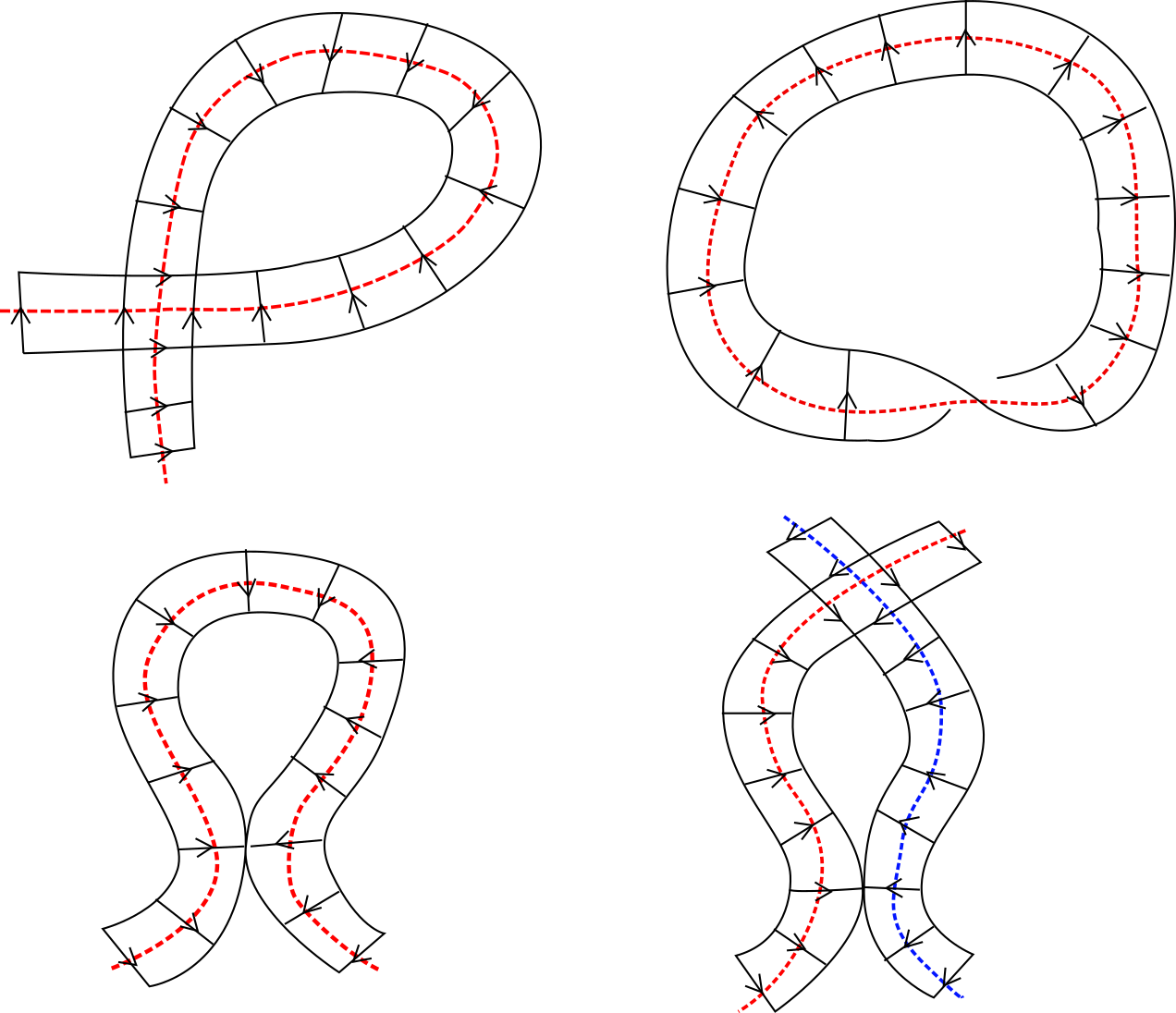} 
          \end{overpic}
	\caption{An illustration of the standard hyperplane pathologies. The dotted line depicts the topological realization of the hyperplane. The edges in the equivalence classes are given arrows indicating the direction. The top left depicts a self intersection. The top right depicts a $1$-sided hyperplane, and the edges with the arrows reversed also belong to the equivalence class. The bottom left depicts a direct self-osculation. The bottom right depicts an inter-osculation.}
	\label{fig:special}
    \end{figure}

\subsection{The adjacency map}

Let $e = (x,y)$ be an edge in $X$ dual to $\Lambda$ and let $v $ be the vertex in $\lk(x)$ corresponding to $e$, and $u $ be the vertex in $\lk(y)$ corresponding to $e$.
The \emph{star} $\cstar(\sigma)$ of a simplex $\sigma$ in a simplicial complex is the subcomplex spanned by the union of all simplicies containing $\sigma$.
We note that in~\cite{Lazarovich18} the star of a simplex is defined by Lazarovich to be the combinatorial $1$-neighbourhood. 
The two notions only coincide in the case when the simplex is the singleton.
This alternative notion, which we denote by $\cSt(\sigma)$ in the introduction, applies to the definition of superstar transitive, but will not be otherwise relevant to the content of this paper.

The \emph{adjacency map} for $e$ is the natural isomorphism
\[
 \ad_e : \cstar(v) \to \cstar(u)   
\]
such that if $v \in \sigma$ then $\ad_e(\sigma)$ is the unique simplex such that $C(\ad_e(\sigma)) = C(\sigma)$.
(This is referred to as the \emph{transfer map} in~\cite{Lazarovich18}).

More generally, let $x$ and $y$ be $0$-cubes in $X$ that belong to some $n$-cube.
Let $C$ be the minimal such $n$-cube in $X$ containing $x$ and $y$.
Let $\sigma_x \subseteq \lk(x)$  and $\sigma_y \subseteq \lk(y)$ the simplicies corresponding to $C$, the we have a natural adjacency map for $C$ given by the natural isomorphism
\[
 \ad_C : \cstar(\sigma_x) \to \cstar(\sigma_y)   
\]
such that if $\sigma$ is a simplex in $\lk(x)$ containing $\sigma_x$, then $\ad_C$ on $\sigma$ is induced by the composition of reflections in $C(\sigma)$ over the midcubes separating $x$ and $y$.
Note that $C(\ad_C(\sigma)) = C(\sigma)$.

Furthermore, suppose that $x,y,z$ are $0$-cubes in $C$ such that $C$ is the minimal cube containing $x$ and $z$.
If $C_1$ and $C_2$ are the minimal subcubes in $C$ containing $x,y$ and $y,z$ respectively, then $\ad_C = \ad_{C_1} \circ \ad_{C_2}$ where each $\ad_{C_i}$ is suitably restricted.

\section{Constructing $\Delta$-Categories.} \label{sec:Delta_Categories}

This section will be devoted to constructing a $\Delta$-category on a compact $L$-cube-complex $X$ such that all finite index subgroups of the hyperplane subgroups are separable in $\pi_1 X$, where $L$ is the Kneser graph as specified in the statement of Theorem~\ref{thm:main}.
We will assume, as stated in Section~\ref{sec:Special}, that we have passed to a finite index cover such that the hyperplanes are fully clean and $2$-sided.

\subsection{A note on notation}

In what follows we will be constructing a category over a cube complex.
We will be doing this by assigning objects to $0$-cubes and assigning morphisms to each $1$-cube. 
For example we might denote the morphism associated to $e$ by $\phi_e$.
In this case, given an edge path $\gamma = (e_1, \ldots, e_n)$, we will let $\phi_\gamma$ denote the composition $\phi_{e_n} \circ \cdots \circ \phi_{e_1}$.
If all the edges are parallel to a given hyperplane $\Lambda$, then we will call $\gamma$ a \emph{parallel path}.

\subsection{Our objects}

Let $n \geq 2, d\geq 1$ and $\Delta$ a finite set with $|\Delta| = nd +1$.
Let $X$ be a compact, non-positively curved cube complex with $2$-sided hyperplanes such that $\lk(v)$ is isomorphic to $\frakK_n(\Delta)$.
We assume, as in the statement of Theorem~\ref{thm:main}
To each zero-cube $x$ in $X$ let $\Delta_x$ be a copy of $\Delta$, and identify $\lk(v)$ with the associated Kneser complex $\frakK_n(\Delta_x)$.
%
Let $\Lambda$ be a hyperplane incident to $x$.
Let $e$ be the one-cube dual to $\Lambda$ with $\tau e = x$.
%
Let $v = \sigma(e)$ be the vertex in $\lk(v)$ corresponding to $e$.
The identification of $\lk(v)$ with $\frakK(\Delta_x)$ allows us to define ${\Lambda}_x := \fraks(v) \subseteq \Delta_x$.
We will also let $\fraks(e) := \fraks(v)$ when is is clear which $0$-cube link we are working with.
This is well defined since $\Lambda$ is fully clean, so the $1$-cube $e$ is the only $1$-cube dual to ${\Lambda}$ incident to $x$.

\subsection{$\Delta$-Categorys on $X$}

\begin{defn}
A \emph{$\Delta$-category on $X$} is a collection of bijections 
$$
  \phi_e : \Delta_{x} \to \Delta_{y},
$$
one for each $1$-cube $e = (x,y)$ in $X$, such that the following conditions are satisfied: 
\begin{enumerate}
 \item  { \bf Invertibility:}  if $e$ is a directed one cube then $\phi_{\bar e} = \phi_e^{-1}$,
 \item  Let $e_1 = (x,y), e_2 = (y,z), e_1'= (y',z), e_2'= (x, y')$ be the edges bounding a square $S$, and $ \Lambda^i$ the hyperplane dual to $e_i$ and $e_i'$ (see Figure~\ref{fig:square}). Then:
  \begin{enumerate} 
   \item  {\bf Commutativity:} $$\phi_{e_2} \circ \phi_{e_1} = \phi_{e_2'} \circ \phi_{e_1'}.$$
   \item {\bf Parallel Transport:} $$ \phi_{e_1}(\Lambda_x^2) = \Lambda_y^2 \; \textrm{ and } \; \phi_{e_2'}(\Lambda_x^1) = \Lambda_{y'}^1.$$
   \end{enumerate}
\end{enumerate}
\end{defn}

   \begin{figure}
	\begin{overpic}[width=.2\textwidth, tics=5, ]{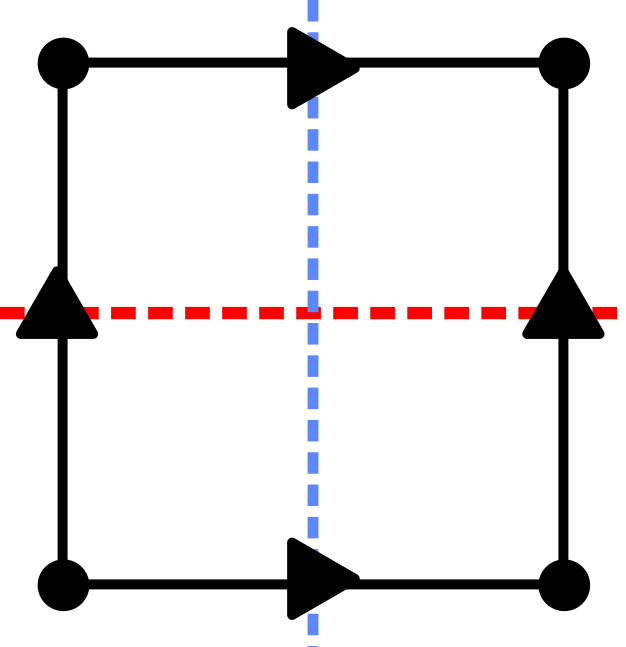} 
	    \put(-15,50){$e_2'$}
	    \put(105, 50){$e_2$}
	    \put(50,-10){$e_1$}
	    \put(50,105){$e_1'$}
	    \put(-5,-5){$x$}
	    \put(-5,100){$y'$}
	    \put(100,-5){$y$}
	    \put(100,100){$z$}
          \end{overpic}
	\caption{The square. The hyperplane $\Lambda^1$ is depicted as the vertical dotted line with the arrows on $e_1$ and $e_1'$ giving the direction. The hyperplane $\Lambda^2$ is the horizontal dotted line with the arrows on $e_2$ and $e_2'$ giving the direction.}
	\label{fig:square}
    \end{figure}

\begin{rem} \label{rem:parallel_transport}
 The parallel transport condition applied to all squares containing $e$ allows us to deduce that $\phi_{e_1}( \Lambda^1_x) = \Lambda^1_y$.
\end{rem}

Let $\{\phi_e \}$ be a $\Delta$-category on $X$, and $f: \hat X \to X$ a cover.
By identifying each link $\lk(\hat x)$ in $\hat X$ with $\frakK_n(\hat \Delta_{\hat x})$, where $\hat \Delta_{\hat x}$ is the copy of $\Delta$ assigned to $\hat x$, by Lemma~\ref{lem:induced_automorphisms} the induced isomorphism between the links
\[
 f_{\hat x} : \lk(\hat x) \to \lk(f(\hat x))
\]
induces an isomorphism 
\[
 \frakf_{\hat x} : \hat{ \Delta}_{\hat x} \to { \Delta}_{ x}.
\]
Thus we can lift the $\Delta$-category $\{\phi_e\}$ on $X$ to a unique $\Delta$-category on $\hat X$ such that the following diagram commutes:

\begin{equation*}
	\begin{tikzcd}[
	ar symbol/.style = {draw=none,"#1" description,sloped},
	isomorphic/.style = {ar symbol={\cong}},
	equals/.style = {ar symbol={=}},
	subset/.style = {ar symbol={\subset}}
	]
	\hat \Delta_{\hat x} \ar{d}{\frakf_{\hat x}} \ar{r}{ \hat \phi_{\hat e} } & \hat \Delta_{\hat y} \ar{d}{ \frakf_{\hat y} } \\
	\Delta_x \ar{r}{ \phi_{e} } & \Delta_y \\
	\end{tikzcd}
\end{equation*}
It is straight forward to verify that $\{ \hat \phi_{\hat e} \}$ satisfies the invertibility and commutativity conditions, since $\frakf_{\hat x}$ is invertible, and since the squares in $X$ lift to squares in $\hat X$.

\subsection{Constructing a $\Delta$-category}

We will construct our $\Delta$-category in two stages.
In the first stage we will define functions $\phi_e^*$ that will be defined on subsets of the domain $\Delta_x$.
We note that in this section we will be composing functions whose domain and ranges will be subsets of larger set.
In this case the composition will be given by restricting to the intersection of the corresponding domains and ranges.

\begin{lem} \label{lem:pre_system}
 There exists a unique family of functions
 \[
  \Big\{ \; \phi_e^*:  (\Delta_x -  \Lambda_x) \to (\Delta_y -  \Lambda_y) \; \Big| \;  \textrm{$\Lambda$ dual to $e= (x,y) \in X^{(1)}$} \; \Big\}
 \]
 such that

 \begin{enumerate}
 \item \label{item:inverses}  $\phi^*_{\bar e} = (\phi^*_e)^{-1}$,
 \item \label{item:commutes}  let $e_1 = (x,y), e_2 = (y,z), e_1'= (y',z), e_2'= (x, y')$ be the edges bounding a square $S$, and $ \Lambda^i$ the hyperplane dual to $e_i$ and $e_i'$ (see Figure~\ref{fig:square}). Then
 \begin{enumerate}
  \item \label{item:commutes} after suitably restricting domains 
 $$\phi_{e_2}^* \circ \phi_{e_1}^* = \phi_{e_2'}^* \circ \phi_{e_1'}^* :
 (\Delta_x -  \Lambda^1_x - \Lambda^2_x) \to (\Delta_z - \Lambda_z^1- \Lambda_z^2),
 $$
 \item \label{item:consistent}  $\phi^*_{e_1}(\Lambda_x^2) = \Lambda_y^2. \; \textrm{ and } \; \phi^*_{e_2'}(\Lambda_x^1) = \Lambda_{y'}^1.$
  \end{enumerate}
\end{enumerate}
\end{lem}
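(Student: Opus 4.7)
The strategy is to derive each $\phi^*_e$ from the adjacency map of Section~\ref{sec:Special} together with Lemma~\ref{lem:induced_automorphisms}. Let $e = (x,y)$ be a $1$-cube dual to $\Lambda$, and let $v \in \lk(x)$, $u \in \lk(y)$ be the vertices corresponding to $e$, so that $\fraks(v) = \Lambda_x$ and $\fraks(u) = \Lambda_y$. Under the identification $\lk(x) = \frakK_n(\Delta_x)$, the vertices of $\lk(x)$ adjacent to $v$ are exactly the $n$-subsets of $\Delta_x$ disjoint from $\Lambda_x$, with an edge between two such subsets precisely when they are disjoint; hence the simplicial link of $v$ in $\lk(x)$ is canonically identified with $\frakK_n(\Delta_x \setminus \Lambda_x)$, and similarly for $u$. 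The adjacency map $\ad_e: \cstar(v) \to \cstar(u)$ restricts to an isomorphism between these Kneser sub-complexes. Since $|\Delta_x \setminus \Lambda_x| = n(d-1)+1$ and the equation $n(d-1)+1 = 2n$ has no solution with $n \geq 2$ and $d \geq 1$ integers, Lemma~\ref{lem:induced_automorphisms} supplies a unique bijection
\[
 \phi^*_e: \Delta_x \setminus \Lambda_x \to \Delta_y \setminus \Lambda_y
\]
that induces this isomorphism. Uniqueness of $\phi^*_e$ at each edge yields uniqueness of the whole family.

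To verify the three conditions I would transport the corresponding statements about adjacency maps. Invertibility $\phi^*_{\bar e} = (\phi^*_e)^{-1}$ is immediate from $\ad_{\bar e} = \ad_e^{-1}$, since both are built from reflections across the single midcube dual to $\Lambda$ taken in opposite order. For parallel transport on the square $S$ of Figure~\ref{fig:square}, let $v_1, v_2 \in \lk(x)$ be the vertices corresponding to $e_1$ and $e_2'$, and $v_1', v_2' \in \lk(y)$ the vertices corresponding to $e_1$ and $e_2$; the simplices $\{v_1, v_2\} \subset \lk(x)$ and $\{v_1', v_2'\} \subset \lk(y)$ both correspond to the cube $S$, so $\ad_{e_1}(v_2) = v_2'$. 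Since the associated $n$-subsets are $\Lambda^2_x$ and $\Lambda^2_y$, this reads $\phi^*_{e_1}(\Lambda^2_x) = \Lambda^2_y$, and symmetrically for $\phi^*_{e_2'}$. For commutativity let $C = S$ be the minimal cube containing the diagonal corners $x$ and $z$; the identity at the end of Section~\ref{sec:Special} gives $\ad_C = \ad_{e_2} \circ \ad_{e_1} = \ad_{e_1'} \circ \ad_{e_2'}$ on $\cstar(\{v_1, v_2\})$. Restricting to the link of $\{v_1, v_2\}$ in $\lk(x)$, both compositions become the same isomorphism between the Kneser complexes on $\Delta_x \setminus (\Lambda^1_x \cup \Lambda^2_x)$ and $\Delta_z \setminus (\Lambda^1_z \cup \Lambda^2_z)$ (the union being disjoint because $v_1, v_2$ span an edge in $\lk(x)$). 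A second application of Lemma~\ref{lem:induced_automorphisms}, on sets of cardinality $n(d-2)+1$ (which also avoids the excluded value $2n$ for $n \geq 2$), forces the two composed bijections $\phi^*_{e_2} \circ \phi^*_{e_1}$ and $\phi^*_{e_2'} \circ \phi^*_{e_1'}$ to agree on the prescribed domain.

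The main obstacle is not a conceptual difficulty but bookkeeping. One must correctly identify the link of $v$ in $\lk(x)$ with $\frakK_n(\Delta_x \setminus \Lambda_x)$ so that the adjacency map is interpreted as an isomorphism of Kneser complexes on reduced ground sets, and one must verify at each step that the relevant cardinalities, namely $n(d-1)+1$ for defining $\phi^*_e$ and $n(d-2)+1$ for commutativity, avoid the exceptional cardinality $2n$ for Lemma~\ref{lem:induced_automorphisms}. Degenerate low-dimensional behaviour (for $d = 1$ there are no squares, and for $d = 2$ the set $\Delta_x \setminus (\Lambda^1_x \cup \Lambda^2_x)$ is a singleton) causes no trouble because the argument proceeds uniformly; once the Kneser-complex identification is made, each required identity descends from the corresponding identity of adjacency maps by the uniqueness clause of Lemma~\ref{lem:induced_automorphisms}.
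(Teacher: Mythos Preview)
Your proposal is correct and follows essentially the same route as the paper: define $\phi^*_e$ from the restriction of $\ad_e$ via Lemma~\ref{lem:induced_automorphisms}, derive parallel transport from $\ad_{e_1}(v_2)=v_2'$, and derive commutativity from the factorisation $\ad_S=\ad_{e_2}\circ\ad_{e_1}=\ad_{e_1'}\circ\ad_{e_2'}$ together with a second use of Lemma~\ref{lem:induced_automorphisms}. The only cosmetic difference is that the paper separates out the degenerate cases $d=1$ (no squares) and $d=2$ (singleton target, so unique bijection) explicitly rather than folding them into the uniform statement; note that for $d=2$ the Kneser complex on a one-element set is empty, so Lemma~\ref{lem:induced_automorphisms} is not literally invoked there---the map is unique because the underlying set is a singleton.
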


\begin{proof}
 
 Let $e = (x,y)$ be a directed $1$-cube in $X$ dual to $\Lambda$.
 Let $v$ be the vertex in $\lk(x)$ corresponding to $\bar e$, and $u$ be the vertex in $\lk(y)$ corresponding to $e$. 
  Then $\cstar(v)$ decomposes as the simplicial join $v \ast \frakK_n(\Delta_x - \Lambda_x)$ and similarly $\cstar(u)$ decomposes as $u \ast \frakK_n(\Delta_y - \Lambda_y)$.
  Thus the adjacency map $\ad_e$ restricts to an isomorphism 
  $$ \frakK_n(\Delta_x - \Lambda_x) \to \frakK_n(\Delta_y - \Lambda_y).$$
  Since $|\Delta_x - \Lambda_x| = |\Delta_y - \Lambda_y| = n(d-1) +1$, by Lemma~\ref{lem:induced_automorphisms} this isomorphism is induced by the bijection:
  \[
   \phi_e^* : (\Delta_x - \Lambda_x) \to (\Delta_y - \Lambda_y).
  \]
  (This requires checking that $n(d-1) +1 \neq 2n$ for $n \geq 2, d \geq 1$).
 
 In the case that $d = 1$ there are no squares in $X$, so conditions~\ref{item:commutes} and~\ref{item:consistent} are satisfied automatically.
 So we assume $d \geq 2$.
 Suppose that $e_1 = (x,y), e_2 = (y,z), e_1'= (y',z), e_2'= (x, y')$ are the edges bounding a square $S$, and $ \Lambda^i$ is the hyperplane dual to $e_i$ and $e_i'$ (see Figure~\ref{fig:square}).
 We now check that conditions~\ref{item:commutes} and~\ref{item:consistent} are satisfied.
 
 Verifying~\ref{item:consistent} follows from observing that $\Lambda_x^2 \subseteq \Delta_x - \Lambda_x^1$ corresponds to a vertex ${u \in \lk(x) = \frakK_n(\Delta_x)}$ and $\Lambda_y^2 \subseteq \Delta_y - \Lambda_y^1$ corresponds to a vertex $v$ in ${\lk(y) = \frakK_n(\Delta_y)}$ such that that $\ad_{e_1}(u) = v$. (Stare at Figure~\ref{fig:square}).
 Thus $\phi^*_{e_1}(\Lambda_x^2) = \Lambda_y^2$ and similarly $\phi^*_{e_2'}(\Lambda_x^1) = \Lambda_{y'}^1$.
 
 We now consider~\ref{item:commutes}. 
  Observe that~\ref{item:consistent} implies
 \begin{align*}
  \phi_{e_2}^* \circ \phi^*_{e_1}((\Delta_{x} - \Lambda_x^2) - \Lambda_x^1) & = \phi_{e_2}^* ((\Delta_{y} - \Lambda_{y}^1) - \Lambda_{y}^2) \\
               & =  \Delta_{z} - \Lambda_{z}^1 - \Lambda_{z}^2. 
 \end{align*}
 Combined with the similar set of equalities for $\phi_{e_2'}^* \circ \phi_{e_1'}^*$ this verifies ~\ref{item:consistent} when $d = 2$ since there is only one possible map between singletons.

 In the case that $d >2$, 
 let $\sigma_x \subseteq \lk(x), \sigma_y \subseteq \lk(y), \sigma_{y'} \subseteq \lk(y'), \sigma_z \subseteq \lk(z)$ denote the $1$-simplicies corresponding to the square $S$.
 We know that 
 $$
 \ad_S = \ad_{e_2} \circ \ad_{e_1} = \ad_{e_1'} \circ \ad_{e_2'} : \cstar(\sigma_x) \to \cstar(\sigma_z).
 $$
 We also have the decomposition
 $$
 \cstar(\sigma_x) = \sigma_x \ast \frakK_n(\Delta_x - \Lambda_x^1 - \Lambda_x^2)
 $$
 and similar decompositions for the stars of $\sigma_y, \sigma_{y'}$, and $\sigma_z$.
 The adjacency map $\ad_S$ therefore restricts to an isomorphism
 \[
  \frakK_n(\Delta_x -  \Lambda_x^1 - \Lambda_x^2) \to 
  \frakK_n(\Delta_z - \Lambda_z^1 - \Lambda_z^2)
 \]
 which, by Lemma~\ref{lem:induced_automorphisms}, is induced by an isomorphism
 \[
  (\Delta_x - \Lambda_x^1 - \Lambda_x^2) \to 
  (\Delta_z - \Lambda_z^1 - \Lambda_z^2)
 \]
 that must coincide with the composition $\phi_{e_2}^* \circ \phi_{e_1}^*$ and $\phi_{e_1'}^* \circ \phi_{e_2'}^*$, as their  restrictions induce the same isomorphism.
 Thus $\phi_{e_2}^* \circ \phi_{e_1}^* = \phi_{e_1'}^* \circ \phi_{e_2'}$.
\end{proof}

We will refer to the maps $\{\phi_e^*\}$ as the \emph{pre-$\Delta$-category}.
Note that if $f: \hat X \to X$ is a cover, then we can lift the pre-$\Delta$-category to $\hat X$.
Indeed, since such pre-$\Delta$-categories are unique, lifting is somehow unnecessary -- you just need to check that the corresponding diagram commutes.

\subsection{The hyperplane parallel holonomy}

As a consequence of Lemma~\ref{lem:pre_system} we deduce that if an edge $e = (x,y)$ is parallel to $ \Lambda$ then we have an bijection:
\[
 \psi_p:  \Lambda_x \to  \Lambda_y
\]
 obtained by  restricting $\phi_e^*$ as given by Lemma~\ref{lem:pre_system}.
 Indeed, if $\Lambda'$ is the hyperplane dual to $e$, then $\Lambda_x \subseteq \Delta_x - \Lambda'_x$.
 We note that this is a category, with objects that are $n$-elements sets, and morphisms that are bijections.
 
 Thus if we fix a choice of side of $\Lambda$ and a $0$-cube $p$ in $\Lambda$ as a basepoint, we obtain a \emph{parallel holonomy}
 \[
  \Psi_{p} : \pi_1(\Lambda, p) \to \Sym({\Lambda}_x).
 \]
  If $e'$ is the edge dual to $\Lambda$ with midpoint $p$ such that $\tau e' = x$ lies on the given side, this holonomy is given by identifying $\Lambda$ with the side of the hyperplane carrier containing the basepoint $x$, and letting the equivalence class of a parallel path $[\gamma] = [e_1, \ldots, e_n]$ based at $x$ map to
  \[
   \Psi_{p}([\gamma]) =  \psi_\gamma,
  \]
  where $\psi_{\gamma}$ denotes the composition $\psi_{e_n} \circ \ldots \circ \psi_{e_1}$.
  Conditions~\ref{item:inverses} and~\ref{item:commutes} in Lemma~\ref{lem:pre_system} ensure that this does not depend on the choice of representative.
  
  We note that the triviality of the holonomy does not depend on the choice of basepoint $p$ (but may depend on the side of the carrier that is chosen).
  Indeed, given another $1$-cube $e''$ dual to $\Lambda$, with $\tau e'' = y$ on the same side of $\Lambda$, with midpoint $p'$ we can check the following diagram commutes:
  \begin{equation*}
	\begin{tikzcd}[
	ar symbol/.style = {draw=none,"#1" description,sloped},
	isomorphic/.style = {ar symbol={\cong}},
	equals/.style = {ar symbol={=}},
	subset/.style = {ar symbol={\subset}}
	]
	\pi_1(\Lambda, p) \ar{d}{} \ar{r}{ \Psi_p } & \Sym(\Lambda_x) \ar{d}{ } \\
	\pi_1(\Lambda, p') \ar{r}{ \Psi_{p'} } & \Sym(\Lambda_y) \\
	\end{tikzcd}
\end{equation*}
  We have chosen some path $\gamma$ connecting $x$ to $y$ in $\tau(\Lambda)$.
  The left vertical map is given by conjugating closed loops by $[\gamma]$, in the standard fashion, and the right vertical map is given by conjugating by $\psi_{\gamma}$.

  The kernel of $\Psi_p$ is a finite index normal subgroup of $\pi_1(\Lambda)$, and by the assumptions of Theorem~\ref{thm:main} will be separable in $\pi_1 X$.
  
  \begin{lem} \label{lem:trivial_parallel}
   There exists a finite cover $\hat X \to X$ such that the parallel holonomies in $\hat X$ are trivial.
  \end{lem}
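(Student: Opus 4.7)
The plan is to produce a normal finite-index subgroup $H \leq \pi_1 X$ whose corresponding cover $\hat X$ kills all parallel holonomies, leveraging the separability hypothesis one hyperplane at a time. The crucial observation is that by the uniqueness clause of Lemma~\ref{lem:pre_system}, the pre-$\Delta$-category on $\hat X$ is precisely the lift of the one on $X$; consequently, the parallel holonomy of a loop $[\hat\gamma]$ in a hyperplane $\hat\Lambda$ of $\hat X$ corresponds, via the isomorphisms $\frakf_{\hat x}$, to the holonomy of its image in the projected hyperplane $\Lambda \subseteq X$. Hence it suffices to arrange that every loop in every hyperplane of $\hat X$ projects into the kernel of the corresponding holonomy downstairs.

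For each hyperplane $\Lambda$ of $X$ I would fix a basepoint $p_\Lambda$ (and path to $x_0$), and let $K_\Lambda \leq \pi_1(\Lambda, p_\Lambda)$ be the intersection of the kernels of the two one-sided parallel holonomies. Since each side is finite-index normal and $\pi_1\Lambda$ is finitely generated (by compactness of $\Lambda$), $K_\Lambda$ is itself finite-index in $\pi_1\Lambda$, and is separable in $\pi_1 X$ by the assumption of Theorem~\ref{thm:main}. The standard separability trick then yields a finite-index subgroup $H_\Lambda \leq \pi_1 X$ with $H_\Lambda \cap \pi_1(\Lambda) \leq K_\Lambda$: enumerate the finitely many coset representatives $g_i$ of $\pi_1(\Lambda)/K_\Lambda$ other than the identity, use separability to find for each $i$ a finite-index subgroup of $\pi_1 X$ containing $K_\Lambda$ but avoiding $g_i$, and intersect. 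Replace each $H_\Lambda$ by its normal core and set $H = \bigcap_\Lambda H_\Lambda$, over the finitely many hyperplanes of the compact $X$. Then $H$ is a normal finite-index subgroup of $\pi_1 X$; let $\hat X \to X$ be the associated cover.

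For verification, any hyperplane $\hat\Lambda$ of $\hat X$ projects to some $\Lambda$, and its fundamental group sits inside $\pi_1 \hat X = H$ while mapping into a conjugate $g\pi_1(\Lambda)g^{-1}$ for some $g \in \pi_1 X$ (depending on the choice of lifted basepoint). Normality of $H$ gives
$$H \cap g\pi_1(\Lambda)g^{-1} = g(H \cap \pi_1(\Lambda))g^{-1} \leq gK_\Lambda g^{-1},$$
which is precisely the kernel of $\Psi_{p_\Lambda}$ after transporting the basepoint by $g$; thus the lifted holonomy on $\hat\Lambda$ is trivial. The main obstacle I anticipate is the tracking of basepoint and conjugation conventions, since the pre-$\Delta$-category, the holonomies, and the hyperplane subgroups each carry ambiguity up to conjugation in $\pi_1 X$. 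Passing to the normal core of $H$ is what makes these compatibilities automatic. A secondary subtlety --- the possible dependence of the holonomy on the chosen side of the carrier --- is handled by defining $K_\Lambda$ as the intersection of the two one-sided kernels from the outset.
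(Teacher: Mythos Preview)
Your argument is correct and follows essentially the same route as the paper: use separability of finite-index subgroups of hyperplane subgroups to find, for each hyperplane, a finite-index subgroup of $\pi_1 X$ meeting $\pi_1\Lambda$ inside the holonomy kernel, pass to normal cores, and intersect over the finitely many hyperplanes. The only cosmetic difference is that you intersect the two one-sided kernels into a single $K_\Lambda$ before invoking separability, whereas the paper treats each side separately and intersects at the end; your verification of the conjugation bookkeeping for hyperplanes of $\hat X$ is in fact more explicit than the paper's.
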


  \begin{proof}
    Let $\Psi$ be a parallel holonomy for some hyperplane $\Lambda$, and some choice of side and basepoint.
    The kernel of $\Psi$ is a finite index normal subgroup of $\pi_1(\Lambda)$, and therefore, by the assumption of Theorem~\ref{thm:main}, will be separable in $\pi_1 X$.
    Let $\{\id , g_1, \ldots, g_\ell\}$ be a minimal set of representatives for the left cosets of $\ker(\Psi)$ in $\pi_1(\Lambda)$.
    As $g_i \notin \ker(\Psi)$, by separability there exists a finite index subgroup $N_i \leqslant \pi_1(X)$ such that $\ker(\Psi) \subseteq N_i$ and $g_i \notin N_i$.
    Thus $\ker(\Psi) = \bigcap_{i=1}^\ell N_i \cap \pi_1(\Lambda)$, since we know $\ker(\Psi) \subseteq \bigcap_{i=1}^\ell N_i$ and that if $g_ih \in \bigcap_{i=1}^\ell N_i \cap \pi_1(\Lambda)$, where $h \in \ker(\Psi)$, then $g_i \in \bigcap_{i=1}^\ell N_i$.
    The normal core, $\Core(\bigcap_{i=1}^\ell N_i)$, is a finite index normal subgroup of $\pi_1 X$ such that $\pi_1(\Lambda) \cap \Core(\bigcap_{i=1}^\ell N_i)$ is contained in $\ker(\Psi)$.
    
    By repeating this for each side of each hyperplane, and intersecting all the resulting normal cores, we obtain a finite index normal subgroup $N \leqslant \pi_1(X)$ such that for each hyperplanes $\Lambda$, the intersection $N \cap \pi_1( \Lambda)$ is contained in the kernel the parallel holonomies on either side of $\Lambda$.
    The the desired finite cover $\hat X \to X$ is given by $N$.
     Let $\{ \hat \phi_{\hat e}^* \}$ denote the lift of the pre-$\Delta$-category on $X$ to $\hat X$.
 Then the following diagram commutes, telling us that the parallel holonomies in $\hat X$ are trivial.
 
\begin{equation*}
	\begin{tikzcd}[
	ar symbol/.style = {draw=none,"#1" description,sloped},
	isomorphic/.style = {ar symbol={\cong}},
	equals/.style = {ar symbol={=}},
	subset/.style = {ar symbol={\subset}}
	]
	\pi_1(\hat \Lambda) \ar{d}{\hat \Psi_{\hat e}} \ar{r}{ f_* } & \pi_1(\Lambda) \ar{d}{ \Psi_e } \\
	\Sym(\hat \Delta_{\hat x}) \ar{r}{ } & \Sym(\Delta_x) \\
	\end{tikzcd}
\end{equation*}
 The hyperplane $\hat \Lambda$ covers $\Lambda$, and the bottom arrow is the isomorphism induced by conjugation by $\frakf_{\hat x}$.
  \end{proof}

\subsection{Extending the maps $\phi_e^*$}

By Lemma~\ref{lem:trivial_parallel}, we now assume that we have passed to a suitable finite cover such that $X$ has trivial parallel holonomies in its pre-$\Delta$-category.
Given an edge $e$ dual to $\Lambda$, it remains to extend $\phi_e^*$, and this means making a choice of bijection $\Lambda_x \to \Lambda_y$.
We can certainly make such choices so that the inversion condition~\ref{item:commutes} is satisfied, and condition~\ref{item:consistent} holds as it holds for $\phi_e^*$.
It therefore remains to ensure we can make our choices so that the commutativity condition~\ref{item:commutes} is satisfied.

For each hyperplane let $e = (x,y)$ be a choice of edge edge dual to $\Lambda$.
We make a choice of map 
$$\phi_e^\circ : \Lambda_x \to \Lambda_y$$
that extends $\phi_e^*$ to $\phi_e$.

Suppose that $e'$ is some other edge dual to $\Lambda$ such that $\tau e'$ lies on the same side of $\Lambda$ as $\tau e$.
Then let $\gamma = (e_1, \ldots, e_p)$ be an edge path parallel to $\Lambda$ that connects $\tau e$ to $\tau e'$.
We also let $\gamma' = (e_1', \ldots, e_q')$ be an edge path parallel to $\Lambda$ that connects $\iota e$ to $\iota e'$.
Then we define
\[
 \phi_{e'}^\circ = \psi_{e_1}^{-1} \circ \cdots \circ \psi_{e_p}^{-1} \circ \phi_e^\circ \circ \psi_{e_q'} \circ \cdots \circ \psi_{e_1'},
\]
where $\psi_{e_i}$ and $\psi_{e_i'}$ are the parallel holonomies on either side of $\Lambda$.
Since the parallel holonomies are trivial, $\phi_{e'}^\circ$ will not depend on the choice of paths $\gamma$ and $\gamma'$.
We let $\phi_{\bar e}^\circ = (\phi_{e}^{\circ})^{-1}$ and recover that $\phi_{\bar e'}^\circ = (\phi_{e'}^\circ)^{-1}$.

It remains to check that $\{ \phi_e \}$, as defined, satisfy our commutativity relations.
Let $e_1 = (x,y), e_2 = (y,z), e_1'= (y',z), e_2'= (x, y')$ be edges bounding a square, and let $ \Lambda^i$ be the hyperplane dual to $e_i$ and $e_i'$ (see Figure~\ref{fig:square}).
Then we consider the following separate cases:
\[
 \phi_{e_2} \circ \phi_{e_1} = 
 \begin{cases}
  \phi_{e_2}^* \circ \phi_{e_1}^* : (\Delta_x - \Lambda_x^1 - \Lambda_x^2) \to (\Delta_z - \Lambda_z^1 - \Lambda_z^2) \\
  \phi^\circ_{e_2} \circ \phi_{e_1}^* : \Lambda_x^2 \to \Lambda_z^2 \\
  \phi^*_{e_2} \circ \phi_{e_1}^{\circ} : \Lambda_x^1 \to \Lambda_z^1 \\
  \phi_{e_2}^\circ \circ \phi_{e_1}^\circ : \emptyset \to \emptyset \\
 \end{cases}
\]

It follows from Lemma~\ref{lem:pre_system} that $\phi_{e_2}^* \circ \phi_{e_1}^* = \phi_{e_1'}^* \circ \phi_{e_2'}^*$.
By considering the parallel holonomies with respect to $\Lambda^2$ we can see that $$\phi_{e_2}^\circ \circ \phi_{e_1}^* = \phi_{e_2}^\circ \circ \psi_{e_1} = \psi_{e_1'} \circ \phi_{e_2'}^\circ = \phi^*_{e_1'} \circ \phi_{e_2'}^\circ.$$
A similar sequence of equalities gives that $\phi^*_{e_2} \circ \phi_{e_1}^{\circ} = \phi_{e_1'}^\circ \circ \phi_{e_2'}^*.$
Altogether this allows us to conclude that $\phi_{e_2} \circ \phi_{e_1} = \phi_{e_1'} \circ \phi_{e_2}$, and that $\{ \phi_e \}$ is a $\Delta$-category, and that we have proven the following:

\begin{prop} \label{prop:Delta_category}
 Let $n \geq 2, d\geq 1$ and let $\Delta$ be a finite set of cardinality $nd+1$.
 Let $L$ be the Kneser complex $\frakK_n(\Delta)$.
 Suppose that $X$ is an $L$-cube-complex with separable hyperplane subgroups.
 Then there exists a finite cover $\hat X \to X$, such that there is a $\Delta$-category over $X$.
\end{prop}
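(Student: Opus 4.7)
The plan is to extend the pre-$\Delta$-category $\{\phi_e^*\}$ from Lemma~\ref{lem:pre_system} to a genuine $\Delta$-category, after first passing to a finite cover on which the parallel holonomies of Lemma~\ref{lem:trivial_parallel} become trivial. Since $\phi_e^*$ is already defined on $\Delta_x - \Lambda_x$, all that is missing for each edge $e = (x,y)$ dual to a hyperplane $\Lambda$ is a bijection $\phi_e^\circ \colon \Lambda_x \to \Lambda_y$, and the task reduces to choosing these in a way that is compatible around every square.

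First I would invoke Lemma~\ref{lem:trivial_parallel} to replace $X$ by a finite cover $\hat X$ such that, after lifting the pre-$\Delta$-category, every parallel holonomy $\Psi_p \colon \pi_1(\Lambda,p) \to \Sym(\Lambda_x)$ on either side of every hyperplane is trivial. Working on $\hat X$, I would then proceed hyperplane by hyperplane: for each hyperplane $\Lambda$, pick a reference dual edge $e_0 = (x_0,y_0)$ and an arbitrary bijection $\phi_{e_0}^\circ \colon \Lambda_{x_0} \to \Lambda_{y_0}$. For any other dual edge $e' = (x',y')$ whose terminal vertex lies on the same side of $\Lambda$ as $\tau e_0$, choose parallel edge paths $\gamma$ from $y_0$ to $y'$ and $\gamma'$ from $x_0$ to $x'$ along the two sides of the carrier, and set
\[
\phi_{e'}^\circ = \psi_\gamma^{-1} \circ \phi_{e_0}^\circ \circ \psi_{\gamma'},
\]
where $\psi_\gamma$ and $\psi_{\gamma'}$ are compositions of the parallel transport maps built from $\{\phi_e^*\}$. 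Triviality of the parallel holonomies guarantees that $\phi_{e'}^\circ$ does not depend on the choices of $\gamma$ and $\gamma'$. Finally, set $\phi_{\bar e}^\circ = (\phi_e^\circ)^{-1}$ and define $\phi_e$ by combining $\phi_e^*$ and $\phi_e^\circ$ into a single bijection $\Delta_x \to \Delta_y$.

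It then remains to check the $\Delta$-category axioms. Invertibility is immediate. The parallel transport condition $\phi_{e_1}(\Lambda_x^2) = \Lambda_y^2$ holds because $\Lambda_x^2 \subseteq \Delta_x - \Lambda_x^1$, where $\phi_{e_1}$ coincides with $\phi_{e_1}^*$, and Lemma~\ref{lem:pre_system}\ref{item:consistent} then applies. The real content is commutativity around a square with edges $e_1,e_1',e_2,e_2'$. One splits $\Delta_x = (\Delta_x - \Lambda_x^1 - \Lambda_x^2) \sqcup \Lambda_x^1 \sqcup \Lambda_x^2$ (the intersection $\Lambda_x^1 \cap \Lambda_x^2$ is empty, since the hyperplanes of a square correspond to disjoint vertices of the Kneser link). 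On the first piece, commutativity of $\phi_{e_2}\circ\phi_{e_1}$ with $\phi_{e_1'}\circ\phi_{e_2'}$ is exactly Lemma~\ref{lem:pre_system}\ref{item:commutes}. On $\Lambda_x^2$, both compositions land in $\Lambda_z^2$: one route is $\phi_{e_2}^\circ \circ \psi_{e_1}$ and the other is $\psi_{e_1'} \circ \phi_{e_2'}^\circ$, and these agree because the two edges $e_1, e_1'$ give parallel transports on the hyperplane $\Lambda^2$, so by the trivial holonomy hypothesis the definition of $\phi_{e_2}^\circ$ at the edge $e_2$ relative to $\phi_{e_2'}^\circ$ at $e_2'$ is precisely twisted by these two $\psi$'s. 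The case of $\Lambda_x^1$ is symmetric, and the pairing of $\Lambda_x^1$ with $\Lambda_x^2$ (and vice versa) maps to the empty set on either side and so is vacuous.

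The main obstacle is this commutativity verification on the subsets $\Lambda_x^1$ and $\Lambda_x^2$: these pieces live outside the domain of the pre-$\Delta$-category, so the agreement of the two compositions is not automatic from Lemma~\ref{lem:pre_system} but must be forced by the definition of $\phi_{e'}^\circ$ via parallel transport. Triviality of the parallel holonomies on both sides of every hyperplane is exactly what makes this forced compatibility consistent across all squares, and is the reason the finite cover of Lemma~\ref{lem:trivial_parallel} is unavoidable.
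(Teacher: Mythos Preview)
Your proposal is correct and follows the paper's own argument essentially verbatim: pass to the finite cover of Lemma~\ref{lem:trivial_parallel}, pick one reference dual edge per hyperplane with an arbitrary bijection $\phi_{e_0}^\circ$, propagate to all other dual edges by conjugating with the parallel-transport maps on the two sides of the carrier, and then verify the square axiom via the decomposition $\Delta_x = (\Delta_x - \Lambda_x^1 - \Lambda_x^2) \sqcup \Lambda_x^1 \sqcup \Lambda_x^2$ exactly as you describe. The only cosmetic slip is the orientation in your displayed formula: with $\gamma$ running from $y_0$ to $y'$ and $\gamma'$ from $x_0$ to $x'$ one wants $\phi_{e'}^\circ = \psi_\gamma \circ \phi_{e_0}^\circ \circ \psi_{\gamma'}^{-1}$ rather than $\psi_\gamma^{-1} \circ \phi_{e_0}^\circ \circ \psi_{\gamma'}$, but this does not affect the argument (and the paper's own displayed formula has an analogous direction hiccup).
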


\section{The Holonomy} \label{sec:Holonomy}

 Given a $\Delta$-category $\{ \phi_e \}$ for $X$ we obtain a holonomy map:
 $$
  \Phi_x: \pi_1(X, x) \to \Sym(\Delta_x)
 $$
  where the homotopy class $[\gamma] = [e_1, \ldots, e_n]$ of the edge path based at $x$ has image
  $$
   \Phi_x([\gamma]) = \phi_{\gamma}.
  $$
  The invertibility and commutativity conditions guarantee that this does not depend on the choice of representative of the homotopy class.
  Note that if $\Phi_x$ is trivial, then the holonomy is trivial with respect to any basepoint since the following diagram commutes:
  
  \begin{equation*}
	\begin{tikzcd}[
	ar symbol/.style = {draw=none,"#1" description,sloped},
	isomorphic/.style = {ar symbol={\cong}},
	equals/.style = {ar symbol={=}},
	subset/.style = {ar symbol={\subset}}
	]
	\pi_1(X, x) \ar{d}{} \ar{r}{ \Phi_x } & \Sym(\Delta_x) \ar{d}{ } \\
	\pi_1(X, y) \ar{r}{ \Phi_y } & \Sym(\Delta_y) \\
	\end{tikzcd}
\end{equation*}
  If $\gamma$ is an edge path connecting $x$ to $y$, then the vertical left arrow is the isomorphism given by conjugating a homotopy class of based loops by $[\gamma]$, and the vertical right arrow is the isomorphism given by conjugating by $\phi_\gamma$.

  The kernel of $\Phi_x$ is a finite index normal subgroup of $\pi_1 X$ and corresponds to a finite sheeted, regular cover $f: \hat X \to X$.
  Lift the $\Delta$-category on $X$ to a $\Delta$-category $\{\hat \phi_{\hat{e}} \}$ on $\hat{X}$.
  We can check that the following diagram commutes:
  
  \begin{equation*}
	\begin{tikzcd}[
	ar symbol/.style = {draw=none,"#1" description,sloped},
	isomorphic/.style = {ar symbol={\cong}},
	equals/.style = {ar symbol={=}},
	subset/.style = {ar symbol={\subset}}
	]
	\pi_1(\hat X,\hat x) \ar{d}{f_*} \ar{r}{ \hat \Phi_{\hat x} } & \Sym(\Delta_{\hat x}) \ar{d}{ } \\
	\pi_1(X, x) \ar{r}{ \Phi_x } & \Sym(\Delta_x) \\
	\end{tikzcd}
\end{equation*}
  The zero-cube $\hat x$ is chosen so that $f(\hat x) = x$, and the right vertical arrow is the isomorphism given by conjugation by $\frakf_{\hat x}$.
  Thus we conclude that the holonomy $\hat \Phi_x$ on $\hat X$ is trivial.
  If the holonomy on $X$ obtained from a $\Delta$-category is trivial, then we say that the $\Delta$-category itself is \emph{flat}.

  \subsection{Constructing the orbi-cover.}

  \begin{prop} \label{prop:virtually_virtually_Coxeter}
    Let $L = \frakK_n(\Delta)$ where $|\Delta| = nd+1$.
    Let $X$ be a compact $L$-cube-complex that has a flat $\Delta$-category on $X$.
    Then there is an orbi-complex cover $X \to X_L$, where $X_L = W_L \backslash D(L)$.
  \end{prop}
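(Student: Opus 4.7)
The plan is to identify the universal cover $\tilde X$ with the Davis complex $D(L)$ and then show that the deck-group action of $\pi_1 X$ on $D(L)$ lies inside the type-preserving subgroup $W_L$; from this one immediately obtains $X = \pi_1 X \backslash D(L)$ as an orbi-complex cover of $X_L = W_L \backslash D(L)$. Since $L = \frakK_n(\Delta)$ with $|\Delta| = nd+1$ is superstar-transitive, Lazarovich's uniqueness theorem furnishes $\tilde X \cong D(L)$, so everything reduces to a type-preservation check.

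First, I would use flatness of the $\Delta$-category on $X$ to manufacture a bijection $q_x: \Delta_x \to \Delta$ at every $0$-cube, satisfying $q_y \circ \phi_e = q_x$ for every edge $e = (x,y)$. Fix a basepoint $x_0$ and any bijection $q_{x_0}: \Delta_{x_0} \to \Delta$, and for an arbitrary $x$ declare $q_x := q_{x_0} \circ \phi_\gamma^{-1}$ for any edge-path $\gamma$ from $x_0$ to $x$; triviality of $\Phi_{x_0}$ makes this independent of $\gamma$. Each hyperplane $\Lambda$ of $X$ then receives a well-defined label $\lambda(\Lambda) := q_x(\Lambda_x) \in L^{(0)}$: Remark~\ref{rem:parallel_transport} gives $\phi_e(\Lambda_x) = \Lambda_y$ whenever $e = (x,y)$ is dual to $\Lambda$, and the parallel-transport condition within the carrier $N(\Lambda)$ shows the label is the same at any incident $0$-cube.

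Next, lift everything to $\tilde X$ via the covering $f: \tilde X \to X$, setting $q_{\tilde x} := q_{f(\tilde x)} \circ \frakf_{\tilde x}$. Choose the isomorphism $\tilde X \cong D(L)$ supplied by Lazarovich so that at a basepoint $\tilde x_0$ lying over $x_0$ the identification $\lk(\tilde x_0) \to L$ coming from $q_{\tilde x_0}$ matches the canonical identification of the link at the identity vertex of $D(L)$ with $L$. Under this isomorphism the pulled-back labeling $\tilde \lambda(\tilde \Lambda) := \lambda(f(\tilde \Lambda))$ of hyperplanes of $\tilde X$ agrees with the canonical $W_L$-coloring of $D(L)$ by generators: the two labelings coincide on edges incident to $\tilde x_0$ by construction, and propagate identically across each edge, because the canonical adjacency (a reflection) fixes generator labels while $\phi^*_e$ becomes the identity on the relevant subsets of $\Delta$ after conjugating by the $q$-maps (indeed $q_y \circ \phi_e \circ q_x^{-1} = \id_\Delta$).

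Finally, for a deck transformation $g$ the identity $f \circ g = f$ yields $\frakf_{g \tilde x} \circ g_\# = \frakf_{\tilde x}$, where $g_\#: \Delta_{\tilde x} \to \Delta_{g \tilde x}$ is the bijection induced by $g$ on links via Lemma~\ref{lem:induced_automorphisms}. Hence
\[
\tilde \lambda(g \tilde \Lambda) \;=\; q_{g \tilde x}\bigl(g_\#(\tilde \Lambda_{\tilde x})\bigr) \;=\; q_{f(\tilde x)} \circ \frakf_{\tilde x}(\tilde \Lambda_{\tilde x}) \;=\; q_{\tilde x}(\tilde \Lambda_{\tilde x}) \;=\; \tilde \lambda(\tilde \Lambda),
\]
so $\pi_1 X$ preserves the canonical coloring of $D(L)$ and therefore lies in $W_L$. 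This produces the desired orbi-complex cover $X = \pi_1 X \backslash D(L) \to W_L \backslash D(L) = X_L$. The main obstacle is confirming that the labeling pulled back from $X$ to $\tilde X$ really does match the canonical coloring of $D(L)$; this hinges on combining Lazarovich's uniqueness theorem with the identity $q_y \circ \phi_e = q_x$, which is precisely the structure packaged into a flat $\Delta$-category.
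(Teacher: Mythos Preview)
Your argument has a genuine gap at the propagation step. You take an isomorphism $\Theta: \tilde X \to D(L)$ from Lazarovich's theorem, normalized so that the induced bijection $\theta_{\tilde x_0}: \Delta_{\tilde x_0} \to \Delta$ equals $q_{\tilde x_0}$, and you want the hyperplane labeling $\tilde\lambda$ (built from the $q$'s) to coincide with the pull-back $\tilde\mu$ of the canonical $W_L$-coloring (built from the $\theta$'s). But crossing an edge $\hat e = (\tilde x_0, \tilde y)$ dual to $\tilde\Lambda$ only forces $\theta_{\tilde y}$ and $q_{\tilde y}$ to agree on $\Delta_{\tilde y} - \tilde\Lambda_{\tilde y}$: the adjacency map $\ad_{\hat e}$ lives on $\cstar$ of the corresponding link vertex and therefore determines $\theta_{\tilde y}$ only up to a permutation of the $n$-element set $\tilde\Lambda_{\tilde y}$. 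Any hyperplane $\tilde\Lambda'$ incident to $\tilde y$ that does \emph{not} cross $\tilde\Lambda$ satisfies $\tilde\Lambda'_{\tilde y} \cap \tilde\Lambda_{\tilde y} \neq \emptyset$, so there is no reason for $q_{\tilde y}(\tilde\Lambda'_{\tilde y})$ and $\theta_{\tilde y}(\tilde\Lambda'_{\tilde y})$ to coincide. Your identity $q_y \circ \phi_e \circ q_x^{-1} = \id_\Delta$ is a statement purely in the $q$-world; in the $\theta$-world there is no analogue of the full $\phi_e$, only of $\phi_e^*$, precisely because the extension $\phi_e^\circ$ was a \emph{choice} that the abstract Lazarovich isomorphism has no reason to respect. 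So the induction does not close, and you cannot conclude that $\pi_1 X$ preserves the canonical coloring.

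The paper avoids this completely by never invoking Lazarovich's theorem. It builds the orbi-cover $X \to X_L$ directly, cell by cell, from the maps $q_x$: every $0$-cube goes to the unique $0$-cube of $X_L$; an edge $e$ dual to $\Lambda$ goes to the half-edge labeled $q_x(\Lambda_x)$ (well-defined by Remark~\ref{rem:parallel_transport}); parallel transport together with $\Lambda^1_x \cap \Lambda^2_x = \emptyset$ sends each square to a quarter-square; higher cubes follow from their $2$-skeleta. The lift of this orbi-cover is then automatically an isomorphism $\tilde X \to D(L)$ carrying $\pi_1 X$ into $W_L$. Your construction of $\tilde\lambda$ and your verification that deck transformations preserve it are both correct; the fix is to use $\tilde\lambda$ to \emph{build} the map to $X_L$ (equivalently, to $D(L)$) rather than to compare it to a coloring coming from an externally supplied isomorphism.
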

  
  \begin{proof}
   Let $\{ \phi_e \}$ be the flat $\Delta$-category on $X$.
   For a basepoint $x$ fix an identification ${q_x : \Delta_x \to \Delta}$.
   For any other $0$-cube $y$ in $X$, let $q_y = q_x \circ \phi_\gamma$ where $\gamma$ is an edge path connecting $y$ to $x$.
   Note that $q_y$ does not depend on the choice of $\gamma$ since the $\Delta$-category is flat.
   
   We will prove the claim by producing a orbi-complex cover $X \to X_L$.
   First we map all $0$-cubes in $X$ to the unique $0$-cube in $X_L$.
   We can extend $X$ to the $1$-skeleton of $X$ by mapping each $1$-cube $e = (x,y)$ dual to $\Lambda$ to the half-$1$-cube corresponding to $q(\Lambda_x)$.
   This makes sense since we know that $q_x(\Lambda_x) = q_y \circ \phi_e (\Lambda_x) = q_y(\Lambda_y)$ by Remark~\ref{rem:parallel_transport}, so $e$ and $\bar e$ are mapped to the same half edge.
   
   Now we want to extend $X^{(1)} \to X_L$ to the $2$-skeleton.
   Let  ${e_1 = (x,y)}$ , ${e_2 = (y,z)}$ , ${e_1'= (y',z)}$ , ${e_2'= (x, y')}$ be the directed one cubes bounding a square $S$ in $X$ such that $e_i$ and $e_i'$ are dual to the hyperplane $\Lambda^i$ (as in Figure~\ref{fig:square}).
   We want to show that $e_i$ and $e_i'$ map to the same half edge, and the $e_1$ and $e_2'$ map to half edges that bound a quarter-square in $X_L$.
   The first fact follows from the parallel transport property since $\phi_{e_1}(\Lambda_x^2) = \Lambda_y^2$ so $q_x(\Lambda_x^2) = q_y \circ \phi_{e_1}(\Lambda_x^2) = q_y(\Lambda_y^2)$.
   The second fact follows from the fact that $\Lambda^1_x \cap \Lambda^2_x = \emptyset$ since $e_1$ and $e_2'$ bound the corner of a square, so $q_x(\Lambda^1_x) \cap q_x(\Lambda^2_x) = \emptyset$.
   
   It is immediate that we can extend $X^{(2)} \to X_L$ to the entire skeleton since the higher dimension cubes are entirely determined by the $1$-skeleton.
   Thus we can lift this orbi-covering to an isomorphism $\wt{X} \to D(L)$ such that the deck transformation group $\pi_1(X)$ is a subgroup of $W_L$.
  \end{proof}

  \begin{proof}[Proof of Theorem~\ref{thm:main}]
   Let $X_1$ and $X_2$ be our $L$-complexes.
    As the hyperplane subgroups are separable, by Proposition~\ref{prop:Delta_category} there is a finite cover $X_i' \to X_i$ such that there is a $\Delta$-category over $X_i'$.
    By considering the holonomy given by the $\Delta$-category, we can pass to a further finite cover $\hat X_i \to X_i'$ such that the induced $\Delta$-category is flat.
   By Proposition~\ref{prop:virtually_virtually_Coxeter}, there are finite orbi-covers  $f_i : \hat X_i \to X_L$.
   The common cover is then obtained by taking the intersection of the corresponding deck transformation groups inside of $W_L$.
  \end{proof}

\bibliographystyle{plain}
\bibliography{Ref.bib}

\end{document}